\def\cal#1{\mathscr{#1}}
\newcommand{\lo}{\longrightarrow}
\newcommand{\re}{{\Bbb R}}
\newcommand\AR{\ensuremath{\mathsf{AR}}}
\def\diam{\mathop{\operator@font diam}\nolimits}
\def\st{\mathop{\operator@font st}\nolimits}
\def\cone{\mathop{\operator@font cone}\nolimits}
\def\mesh{\mathop{\operator@font mesh}\nolimits}
\newtheorem{theorem}{Theorem}[section]
\newtheorem{corollary}[theorem]{Corollary}
\newtheorem{proposition}[theorem]{Proposition}
\theoremstyle{definition}
\newtheorem{definition}[theorem]{Definition}
\theoremstyle{remark}
\numberwithin{equation}{section}
\begin{document}\title{A short proof of Toru\'nczyk's Characterization Theorems}
\author{Jan J. Dijkstra}
\address{Faculteit der Exacte Wetenschappen\,/\,Afdeling Wiskunde
\\Vrije Universiteit\\
De Boelelaan 1081\\
1081 HV \ Amsterdam\\
The Netherlands}
\email{jan.dijkstra1@gmail.com} 
\author{Michael  Levin}

\address{Department of Mathematics\\
Ben Gurion University of the Negev\\
P.O.B. 653\\
Be'er Sheva 84105, Irael}\email{mlevine@math.bgu.ac.il}

\author{Jan van Mill}
\address{Faculteit der Exacte Wetenschappen\,/\,Afdeling Wiskunde
\\Vrije Universiteit\\
De Boelelaan 1081\\
1081 HV \ Amsterdam\\
The Netherlands}

 \email{j.van.mill@vu.nl}

\date{\today}

\keywords{Hilbert cube, Hilbert space, resolution}

\subjclass[2010]{57N20}

\thanks{The second author was supported in part by a grant from the  Netherlands Organisation for Scientific Research (NWO)
and ISF grant 836/08}

\begin{abstract}
We present short proofs of Toru\'nczyk's well-known characterization theorems of the Hilbert cube and Hilbert space, respectively.
\end{abstract}

\maketitle

\section{Introduction}
\emph{All spaces under discussion are assumed to be separable and metrizable.}

Recall that a compactum (complete space) $Y$ is
\emph{strongly universal} if any map $f : X \lo Y$ from
a compactum (complete space) can be
approximated arbitrarily closely by a (closed) embedding into $Y$. In the non-compact
case the closeness is measured by open covers of $Y$.
A compactum (complete space) is said to be a \emph{Hilbert type compactum}
(\emph{Hilbert type space}) if it is strongly universal and an \AR.

\begin{definition}
\label{definition}
We say that  a Hilbert type compactum (space) $H$ is a \emph{model space}
for Hilbert type compacta (Hilbert type spaces)
if it has the following properties:

(i) (stability) $H\approx H\times [0,1]$;

(ii) (Z-set Unknotting Theorem) given an open cover $\cal U$ of $H$,
an open subset  $\Omega$ of $H$,
 homeomorphic Z-sets $Z_1$ and $Z_2$ of $H$ contained in $\Omega$ and
 a homeomorphism $\phi: Z_1 \lo Z_2$ homotopic
 to the identity map   of $Z_1$ by
 a homotopy  controlled by $\cal U$ and supported by $\Omega$
  there exists
  a homeomorphism $\Phi:  H \lo H$ such that $\Phi$ extends $\phi$ and
 $\Phi$ is  controlled by $\cal U$ and  supported by $ \Omega$.
\end{definition}

Note that  both the strong universality and condition (ii)
of Definition~\ref{definition} (the Z-set unknotting theorem) are  local properties,
that is
if they  hold locally then they hold globally as well.

The goal of this note is  to present a complete and self-contained
proof of the fact  that the existence of a model space
for Hilbert type compacta (Hilbert type spaces) implies the characterization
theorem which says that every two Hilbert type compacta (spaces) are homeomorphic.
These results, first proved by Toru\'nczyk~\cite{tor:hilbertcube,tor:hilbertspace,tor:fix}, are widely known and were applied in diverse settings. They also inspired the characterization results of the universal Menger spaces by Bestvina~\cite{bestvina} and the recent work on N\"{o}beling spaces
\cite{ageev1,ageev2,ageev3}, \cite{nagorko}, \cite{levin1, levin2}.

One would probably expect that our abstract approach
 will make the proofs  longer and more complicated and we were  surprised
to find out that this approach can considerably shorten and simplify the proofs
despite that we use already known techniques and ideas.
This was mainly achieved by  carefully analyzing  existing  proofs,
extracting essential parts, avoiding unnecessary repetitions, splitting
 the proofs into short parts and  sometimes
 reversing
the historical order of  the results. For example, we simplified the proof of Miller's 
cell-like resolution theorem \cite{miller} by using   techniques
introduced  later for proving the characterization theorems.

It was known before the characterization theorems were proved  that
  model spaces for
 Hilbert type compacta and Hilbert type spaces exist
 (for example Hilbert cube and Hilbert space respectively).
One of the features of our approach is that we never work
with a particular realization  of a model space, we don't even assume that
the Hilbert cube $Q$ is a model space for Hilbert type compacta.
Although in the compact case we are able to detect on a relatively
early stage of the proof that if a model space exists  it must be
homeomorphic to $Q$, in the non-compact case we can pretend not to know what  a model space looks like until
the characterization is proved.

\section{Preliminaries}

We assume that the reader is familiar with general facts regarding
\AR's, Hilbert type compacta and Hilbert type spaces, cell-like maps etc. Most of the necessary information can be found in \cite[\S\S7.1-7.3]{vm:book}. See also Chapman~\cite{chapman:structure} and Edwards~\cite{edwards:charac}. Earlier simplifications of the Toru\'nczyk's proofs can be found in~\cite{bbmw} and \cite{walsh:alternate}.

Let $ f : X \lo Y$ be a proper map and let $A$ a closed subset of $Y$.
By $X\cup_f A$ we denote the quotient space  of $X$ obtained by collapsing the fibers
over $A$ to singletons. As usual, for a proper surjection $f : X \lo Y$, we let $M(f)$ denote the mapping cylinder of $f$, that is, $M(f)$ is obtained from $X\times[0,1]$ by replacing $X\times\{1\}$ by $Y$. We usually let $\pi_Y : M(f) \lo Y$  denote the projection.
If $f: X\to Y$ and $A\subset Y$ then we say that $f$ is {\em one-to-one over $A$} if the restriction of $f$ to $f^{-1}(A)$ is one-to-one.

The following result displays a technique for proving the existence of a homeomorphism between an $M(f)$ and the range of $f$ that is known as `the Edwards trick'.
\begin{proposition}\label{Edwardstrick} Let $f:X\to Y$ be a proper surjection between complete spaces and let $\pi_Y:M(f)\to Y$ be the natural projection.
Assume that for every open cover $\mathcal U$ of $Y$ there are a near homeomorphism $\alpha:M(f)\to M(f)$ and a map $\beta:M(f)\to Y$, $\mathcal U$-close to $\pi_Y$, with
$\pi_Y=\beta\circ \alpha$ and $\beta$ one-to-one over $\beta(X\times\{0\})$. Then $\pi_Y$ is a near homeomorphism.
\end{proposition}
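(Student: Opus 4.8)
The plan is to fix an arbitrary open cover $\mathcal{W}$ of $Y$ and to build a homeomorphism $h\colon M(f)\to Y$ that is $\mathcal{W}$-close to $\pi_Y$; since $\mathcal{W}$ is arbitrary this is precisely the assertion that $\pi_Y$ is a near homeomorphism. I would produce $h$ as the limit of an inductive process that feeds the hypothesis a rapidly shrinking sequence of open covers $\mathcal{U}_1,\mathcal{U}_2,\dots$ of $Y$, with $\mathcal{U}_1$ refining $\mathcal{W}$ and each $\mathcal{U}_{n+1}$ a star-refinement of $\mathcal{U}_n$.

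First I would extract everything I can from one application of the hypothesis. Let $\beta$ be a map supplied for a cover $\mathcal{U}$ and put $A:=\beta(X\times\{0\})$. Because $\beta$ is proper and one-to-one over $A$, and $X\times\{0\}$ is a closed subset of $M(f)$ contained in $\beta^{-1}(A)$, the restriction $\beta|_{X\times\{0\}}$ is a proper continuous bijection onto $A$, hence a homeomorphism, and $A$ is closed in $Y$. Thus $\beta$ is \emph{already} a homeomorphism over the closed set $A$. Moreover $A$ is $\mathcal{U}$-dense in $Y$: the surjectivity of $f$ gives $\pi_Y(X\times\{0\})=f(X)=Y$, and $\beta$ is $\mathcal{U}$-close to $\pi_Y$, so every point of $Y$ lies in a member of $\mathcal{U}$ that meets $A$. (This is exactly where the surjectivity of $f$ and the clause ``one-to-one over $\beta(X\times\{0\})$'' are used.) Finally, since $\pi_Y=\beta\circ\alpha$ with $\alpha$ a near homeomorphism, and since a near homeomorphism composed with a near homeomorphism is again a near homeomorphism (after the routine bookkeeping with open covers), in order to approximate $\pi_Y$ by homeomorphisms it is enough to approximate $\beta$ by homeomorphisms — precompose a homeomorphism close to $\beta$ with a homeomorphism close enough to $\alpha$.

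These facts would then be fed into an induction. Starting from $g_0=\pi_Y$, at the $n$-th stage I would apply the hypothesis with the very fine cover $\mathcal{U}_n$ and use the resulting factorization (together with the remarks above, applied to the map produced at the previous stage) to replace $g_{n-1}$ by a proper surjection $g_n\colon M(f)\to Y$ which differs from $g_{n-1}$ only by a small controlled motion, which is a homeomorphism over a closed set $A_n$, and which comes with a homeomorphism $\bar{\alpha}_n$ of $M(f)$. Shrinking the $\mathcal{U}_n$ fast enough one simultaneously forces: the products $\bar{\alpha}_1\circ\cdots\circ\bar{\alpha}_n$ to converge to a homeomorphism of $M(f)$; the sequence $g_n$ to converge to a proper surjection $g_\infty\colon M(f)\to Y$ that is $\mathcal{W}$-close to $\pi_Y$; and — using the $\mathcal{U}_n$-density of the sets $A_n$ together with a fixed countable base of $Y$ — the $A_n$ to be chosen so that $g_\infty$ is one-to-one over every point of $Y$. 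A proper continuous bijection between complete metrizable spaces is a homeomorphism, so $g_\infty$ is the desired $h$.

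The step I expect to be the main obstacle is this last one: carrying out the three requirements of the induction at once. The hypothesis only ever factors the \emph{fixed} map $\pi_Y$, never the intermediate maps $g_{n-1}$, so at each stage one must re-derive the needed behaviour from a fresh application to $\pi_Y$ and graft it onto $g_{n-1}$ without damaging the injectivity already secured over $A_1,\dots,A_{n-1}$; arranging that this grafting is governed by a single summably small family of corrections — so that the homeomorphisms $\bar\alpha_1\circ\cdots\circ\bar\alpha_n$ converge, the total drift of $g_n$ away from $\pi_Y$ remains within $\mathcal{W}$, and above all the closed sets $A_n$ accumulate to a set over which the limit is globally one-to-one — is the delicate core of the argument, and is where the strength of the quantifier ``for every open cover $\mathcal{U}$'' is genuinely consumed. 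The remaining ingredients (properness, completeness, and the stability of mapping cylinders under small perturbations) are routine.
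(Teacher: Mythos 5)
There is a genuine gap, and it sits exactly where you flag ``the delicate core'': your scheme has no mechanism for shrinking the fibers of $\beta$ \emph{inside $M(f)$}, and without one neither the stability of injectivity over $A_1,\dots,A_{n-1}$ nor the injectivity of the limit $g_\infty$ can be obtained. The hypothesis only controls $\beta$ in the $Y$-direction: $\beta$ being $\mathcal U$-close to $\pi_Y$ means each fiber $\beta^{-1}(y)$ lies in $\pi_Y^{-1}(\st(y,\mathcal U))$, a set that is thin over $Y$ but can contain entire fibers of $f$ swept along most of the $[0,1]$-coordinate. So over points of $Y\setminus A$ arbitrarily close to $A$ the fibers of $\beta$ may be huge in $M(f)$; consequently a later $\mathcal U_{n+1}$-small correction can destroy the injectivity already secured over $A_n$ (a point of $A_n$ with singleton preimage has neighbours with enormous preimages), and ``one-to-one over a $\mathcal U_n$-dense closed set for every $n$'' does not pass to injectivity of a limit without a uniform bound on fiber diameters tending to $0$. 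The missing idea --- the actual content of the Edwards trick --- is the vertical reparametrization: one chooses graphs $s_i\colon X\to(0,1)$ adapted to the non-singleton fibers of $\beta$ (all of which miss $X\times\{0\}$, since $\beta$ is one-to-one over $\beta(X\times\{0\})$) and a fiberwise squeeze $\Psi$ of $M(f)$ with $\pi_Y\circ\Psi=\pi_Y$ that pushes those fibers up toward $Y\subset M(f)$ or traps them between prescribed graphs, so that the fibers of $\beta\circ\Psi$ become small in $M(f)$ itself.

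Once that is in place the whole inductive limit construction is unnecessary: the paper verifies Bing's shrinking criterion for $\pi_Y$ in one shot. Given covers $\mathcal U_M$ of $M(f)$ and $\mathcal U_Y$ of $Y$, it builds $\Psi$ as above, approximates the near homeomorphism $\alpha$ by a homeomorphism $\Phi$, and checks that $\Phi^{-1}\circ\Psi$ is a shrinking homeomorphism: $\pi_Y\circ\Phi^{-1}\circ\Psi=\beta\circ\alpha\circ\Phi^{-1}\circ\Psi$ stays $\mathcal U_Y$-close to $\pi_Y$ and has fibers refining $\mathcal U_M$. Your preliminary observations (surjectivity of $f$ forcing $\beta(X\times\{0\})$ to be $\mathcal U$-dense, reducing the approximation of $\pi_Y$ to that of $\beta$, properness of a limit bijection) are correct but do not substitute for the squeeze; I would rework the argument around Bing shrinking rather than trying to repair the induction.
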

\begin{proof}
  Identify $M(f) \setminus Y$ with $X\times [0,1)$ and
let  $\pi_X : M(f)\setminus Y \lo X$ be the natural projection.
With the aim of using Bing shrinking consider      open covers  ${\cal U}_M$ of  $M(f)$  and
  ${\cal U}_Y$  of $Y$ respectively. Then there are
  an open cover  ${\cal U}_X$
  of $X$ and   an infinite sequence of continuous functions $t_i : X \lo (0,1)$ ($i\in\mathbb N$) with $t_{i+1}(a)<t_i(a)$ and $\lim_{i\to\infty} t_i(a)=0$ for every $a\in X$
   such that the collection ${\cal U}$
  of open sets in $M(f)$ defined below refines ${\cal U}_M$.  
  Assume that $[0,1]$ is located on the vertical axis as usual so that the points of  $Y$ are above the graph of every function
  $t_i$ in $M(f)$.
  The collection
  ${\cal U}$ consists of the portions of the sets of $\pi_Y^{-1}({\cal U}_Y)$
  above the graph of the function $t_3$  and the portions of the sets of
  $\pi_X^{-1}({\cal U}_X)$ between the graphs of $t_{i+2}$ and $t_i$ for  $i\in\mathbb N$.

Assume that $\alpha$ and $\beta$ are chosen as in the premise of the proposition with $\beta$ ${\cal U}_Y$-close to $\pi_Y$. Then one can find infinite sequence of continuous functions $s_i : X \lo (0,1)$ ($i\in\mathbb N$) with $s_{i+1}(a)<s_i(a)$ and $\lim_{i\to\infty} s_i(a)=0$ for every $a\in X$
  with the following
 properties.  For every $a\in X$, $s_{i+1}(a)<s_i(a)$ and $\lim_{i\to\infty} s_i(a)=0$ and
  for every fiber $F$  of $\beta$ that is not a singleton in $X \times \{0\}$ we have that
 $F$ lies in between the graphs of $s_{i+2}$ and $s_i$ for some $i\in \mathbb N$
  if $F$ intersects the closed region below the graph of $s_2$;
  $F$ lies above the graph of $s_3$
  if $F$ intersects the closed region above the graph of  $s_2$ and
 $\pi_X(F)$ is contained in an element
 of $ {\cal U}_X$ if  $F$ intersects  the closed region below the graph of $s_1 $.
 
Put $t_0(a)=s_0(a)=1$ for each $a\in X$. Consider  the homeomorphism  $\psi :X\times [0,1]\lo X \times [0,1]$
 that maps every interval $\{a\}\times [t_{i+1}(a),t_i(a)]$ linearly onto $\{a\}\times [s_{i+1}(a),s_i(a)]$. Then   $\Psi : M(f) \lo M(f)$
  is the homeomorphism of $M(f)$ induced by $\psi$.
 We have that $\pi_Y=\pi_Y \circ \Psi$
  and  the fibers of $\beta\circ \Psi$ refine ${\cal U}_M$.

  Recall that $\alpha$ is a near homeomorphism and
  $\pi_Y=\beta \circ \alpha$. Then $\alpha$ can be approximated by
  a homeomorphism $\Phi $  such that $\alpha\circ \Phi^{-1}$ is so close
  to the identity map of $M(f)$ that the fibers of
  $\pi_Y \circ  \Phi^{-1} \circ \Psi=\beta \circ\alpha \circ  \Phi^{-1} \circ \Psi$
   also refine ${\cal U}_M$.
  Clearly, if $\beta$  and $\Phi$ are  sufficiently close to $\pi_Y$
  and $\alpha$ respectively then $\pi_Y \circ \Phi^{-1} \circ \Psi$ is
  close to $\pi_Y$ and the proposition follows from Bing's shrinking criterion.
  \end{proof}

\section{Topological characterization of the Hilbert Cube}
\label{hilbert-cube-characterization}
Everywhere in this section `model space' means
`model space for Hilbert type compacta'.
We will show that the existence of a model space implies
 the characterization of Hilbert type compacta.

\begin{proposition}
\label{adjoint}
Let $H$ be a model space,
$f : H \lo Y$ be a cell-like map, $Y$ an \AR\ and  $A$ a closed subset
of $Y$ such that $Z=f^{-1}(A)$ is a Z-set in $H$. Then
the quotient map  $\pi : H \lo H\cup_f A$ is a near homeomorphism and
$A$ is a Z-set in $H\cup_f A$.
\end{proposition}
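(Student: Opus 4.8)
This statement has two parts: that the quotient map $\pi\colon H\to H\cup_f A$ is a near homeomorphism, and that $A$ is a $Z$-set in $H\cup_f A$. I would establish them in that order, the second being an easy consequence of the first together with the hypothesis on $Z$. First observe that $H\cup_f A$ is a compactum, being the quotient of the compactum $H$ by the upper semicontinuous decomposition whose only nondegenerate members are the fibres $f^{-1}(a)$, $a\in A$; since $f$ is cell-like these are cell-like, so $\pi$ is a cell-like surjection. By Bing's shrinking criterion it therefore suffices to show that for every open cover $\mathcal U$ of $H\cup_f A$ the decomposition $\{f^{-1}(a):a\in A\}$ of $H$ can be shrunk by a homeomorphism of $H$ that displaces $\pi$-images within $\mathcal U$.

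The tool for the shrinking is the Edwards trick (Proposition~\ref{Edwardstrick}), applied to the natural projection $\pi_{H\cup_f A}\colon M(\pi)\to H\cup_f A$ of the mapping cylinder of $\pi$. So for each $\mathcal U$ the task is to produce a near-homeomorphism $\alpha$ of $M(\pi)$ and a map $\beta\colon M(\pi)\to H\cup_f A$ which is $\mathcal U$-close to $\pi_{H\cup_f A}$, satisfies $\pi_{H\cup_f A}=\beta\circ\alpha$, and is one-to-one over the $\beta$-image of the bottom copy $H\times\{0\}$. These are assembled from three facts. Since $f$ is a cell-like map between \AR's it is a fine homotopy equivalence, so given an arbitrarily fine open cover of $Y$ there is a map $s\colon Y\to H$ with $f\circ s$ close to $\mathrm{id}_Y$ and with $s\circ f$ joined to $\mathrm{id}_H$ by a homotopy whose tracks are carried by $f$ into small sets. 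Since $H$ is a Hilbert type compactum, $s$ may moreover be taken to be a $Z$-embedding. Finally, the $Z$-set unknotting theorem converts the near-section $s$ and the homotopy $s\circ f\simeq\mathrm{id}_H$, in combination with the stability homeomorphism $H\approx H\times[0,1]$, into a genuine near-homeomorphism $\alpha$, keeping $\beta$ controlled by $\mathcal U$ throughout; the place where the unknotting theorem is invoked relies precisely on the fibres $f^{-1}(a)$ all lying inside the $Z$-set $Z$. Proposition~\ref{Edwardstrick} then gives that $\pi_{H\cup_f A}$ is a near homeomorphism, and a routine argument (restricting approximating homeomorphisms of $M(\pi)$ to the bottom copy of $H$ and readjusting) shows that $\pi$ itself is a near homeomorphism. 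In particular $H\cup_f A$ is an \AR, homeomorphic to $H$.

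For the second part, $H\cup_f A$ is now known to be an \AR, so $\pi\colon H\to H\cup_f A$ is a cell-like map between \AR's and hence again a fine homotopy equivalence. Given an open cover $\mathcal W$ of $H\cup_f A$, choose a map $t\colon H\cup_f A\to H$ with $\pi\circ t$ $\mathcal W$-close to the identity; since $Z$ is a $Z$-set in $H$ we may, after a small correction preserving this closeness, arrange that $t(H\cup_f A)\cap Z=\emptyset$. Because $\pi^{-1}(A)=Z$, the map $\pi\circ t$ then takes values in $(H\cup_f A)\setminus A$ and remains $\mathcal W$-close to the identity, so $A$ is a $Z$-set in $H\cup_f A$.

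The main obstacle is the construction of $\alpha$ and $\beta$: the fibres $f^{-1}(a)$ must be shrunk for all $a\in A$ simultaneously while the displacement, measured in $H\cup_f A$, stays within $\mathcal U$. I expect this to be an infinite, absorption-type construction whose individual stages are carried out in a controlled and supported manner via the $Z$-set unknotting theorem, with the reductions above serving only to package it so that Proposition~\ref{Edwardstrick} can do the bookkeeping.
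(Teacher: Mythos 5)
Your opening reduction to Bing's shrinking criterion is the right first move, but the proof then takes a detour and, more importantly, leaves its central step unexecuted. The paper does not route this proposition through the Edwards trick at all: Proposition~\ref{Edwardstrick} is reserved for the later statement that $\pi_Y\colon M(f)\to Y$ is a near homeomorphism (Proposition~\ref{edwards}). Here only the fibres over $A$ are collapsed, and they all lie in the single Z-set $Z=f^{-1}(A)$, which makes a direct, one-step shrinking possible. Concretely: given $\varepsilon>0$, since $f$ is a fine homotopy equivalence one lifts $f|_Z$ to a Z-embedding $\phi\colon Z\to H$ with $\phi(Z)\cap Z=\emptyset$, with $\diam\phi(f^{-1}(a))<\varepsilon$ for every $a\in A$ (because $\phi$ approximates $s\circ f|_Z$ for an approximate section $s$), and with $\phi\simeq\mathrm{id}_Z$ by a homotopy controlled by $f^{-1}(\mathcal U)$ and supported in the $\varepsilon$-neighbourhood $\Omega$ of $Z$; condition (ii) of Definition~\ref{definition} then extends $\phi$ to a homeomorphism $\Phi$ of $H$ controlled by $f^{-1}(\mathcal U)$ and supported by $\Omega$. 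This single $\Phi$ shrinks every nondegenerate fibre while displacing $\pi$-images within $\mathcal U$, and Bing's criterion concludes. You list exactly these ingredients, but instead of assembling them you defer the construction of $\alpha$ and $\beta$ to an unspecified ``infinite, absorption-type construction'' which you yourself call the main obstacle --- that construction \emph{is} the proposition, so as written the argument has a hole precisely where its content should be.

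The detour through $M(\pi)$ also creates a logical problem even if $\alpha$ and $\beta$ were produced: Proposition~\ref{Edwardstrick} would only yield that $\pi_{H\cup_f A}\colon M(\pi)\to H\cup_f A$ is a near homeomorphism, and to pass from that to $\pi$ itself being one you would need the collapse $H\times[0,1]\to M(\pi)$ to be a near homeomorphism --- which is Corollary~\ref{mapping-cylinder}, itself a consequence of the proposition you are proving. That step is not ``routine readjusting.'' By contrast, your argument for the second assertion (that $A$ is a Z-set in $H\cup_f A$) is correct once the first part is in hand, and is close in spirit to the paper's, which pushes $H$ off $Z$ by a map $f'$, approximates $\pi$ by a homeomorphism $\pi'$, and uses $\pi\circ f'\circ(\pi')^{-1}$ as the witness.
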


\begin{proof}
Fix $\varepsilon > 0$ and let $\cal U$ be
an open $\varepsilon$-cover of $Y$ and
$\Omega$ the $\varepsilon$-neighborhood of $Z$.
Using that $f$ is a fine homotopy equivalence,
 we lift $f$ restricted to $Z$
to a $Z$-embedding $\phi: Z \lo H$ such that $\phi(Z) \cap Z=\emptyset$,
$\phi(f^{-1}(a))$ is of diameter less than $\varepsilon$ for every $a \in A$
and  $\phi$ is homotopic
to the identity map  of $Z$ by a homotopy controlled by $f^{-1}({\cal U})$
and supported by $\Omega$. Then, by Definition~\ref{definition},  $\phi$ extends to
a homeomorphism $\Phi : H\lo H$ controlled by $f^{-1}({\cal U})$ and
supported by $\Omega$. Hence Bing's shrinking criterion implies that $\pi$ is a near
homeomorphism. Then, since $f^{-1}(A)=\pi^{-1}(A)$ is a Z-set in $H$, we get that
$A$ is a Z-set in $H\cup_f A$. Indeed, approximate the identity map of $H$ by a map $f': H \lo H\setminus Z$
and approximate $\pi$ by a homeomorphism $\pi'$.
Then the map $\pi \circ f' \circ (\pi')^{-1}$ witnesses that $A$ is a Z-set
in $H \cup_f A$.
\end{proof}

\begin{corollary}
\label{mapping-cylinder}
Let $H$ be a model space,
$f : H \lo Y$  a cell-like map and $Y$ a compact \AR. Then the projection from
$H\times [0,1]$ to $M(f)$ is a near homeomorphism.
In particular, we get that $M(f)\approx H$.
\end{corollary}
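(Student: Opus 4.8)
The plan is to reduce this to Proposition~\ref{adjoint} by replacing $H$ with $H\times [0,1]$, which is again a model space by the stability property~(i) of Definition~\ref{definition}.

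Concretely, I would consider the map $g=f\times\mathrm{id}_{[0,1]}:H\times [0,1]\lo Y\times [0,1]$. It is proper since $H\times [0,1]$ is compact, and its point-inverses $g^{-1}(y,t)=f^{-1}(y)\times\{t\}$ are cell-like because $f$ is, so $g$ is a cell-like map; moreover $Y\times [0,1]$ is a compact \AR, being a product of compact \AR's. Taking $A=Y\times\{1\}$, which is closed in $Y\times [0,1]$, we have $Z:=g^{-1}(A)=H\times\{1\}$, and this is a Z-set of $H\times [0,1]$: the identity map is approximated by the maps $(x,t)\mapsto(x,\min\{t,1-\varepsilon\})$, whose ranges avoid $H\times\{1\}$. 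Thus all hypotheses of Proposition~\ref{adjoint} hold with $H\times [0,1]$ in place of $H$, so the quotient map $\pi:H\times [0,1]\lo (H\times [0,1])\cup_g A$ is a near homeomorphism.

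It then remains to observe that $(H\times [0,1])\cup_g A$ is nothing but $M(f)$: forming $\cup_g A$ collapses each fiber $g^{-1}(y,1)=f^{-1}(y)\times\{1\}$ to a point, which converts $H\times\{1\}$ into $Y$ via $f$ while leaving $H\times [0,1)$ untouched, and this is precisely the construction of the mapping cylinder; under this identification $\pi$ becomes the natural projection $p:H\times [0,1]\lo M(f)$. Hence $p$ is a near homeomorphism; in particular it can be approximated by homeomorphisms, so $M(f)\approx H\times [0,1]$, and $H\times [0,1]\approx H$ by stability, which gives the last assertion. The argument is short, and the only slightly delicate points are the verification that $H\times\{1\}$ is a Z-set of $H\times [0,1]$ and the bookkeeping identifying $(H\times [0,1])\cup_g(Y\times\{1\})$ with $M(f)$; both are routine. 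The real content is the conceptual step of passing from $H$ to $H\times [0,1]$ so that Proposition~\ref{adjoint} becomes applicable.
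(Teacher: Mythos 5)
Your proof is correct and is precisely the deduction the paper intends: the paper's own proof of this corollary is just ``Follows from Proposition~\ref{adjoint}'', and the way to make that work is exactly your substitution of $H\times[0,1]$ (a model space by stability) for $H$, $f\times\mathrm{id}$ for $f$, and $A=Y\times\{1\}$, together with the identification of $(H\times[0,1])\cup_{f\times\mathrm{id}}(Y\times\{1\})$ with $M(f)$. No issues.
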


\begin{proof} Follows from Proposition~\ref{adjoint}. \end{proof}

Let $f : X \lo Y$ be a map of compacta and $\pi_Y : M(f) \lo Y$  the projection.
We say that $f$ is a $\bf nice$ map if the identity map of $Y$ can arbitrarily closely be approximated
by an embedding $g : Y \lo Y$ such that $\pi_Y^{-1}(g(Y))$ is a Z-set in $M(f)$. Note that
for every map $f : X \lo Y$ and a Hilbert type compactum $C$
the induced map $f \times  id : X \times C  \lo Y \times C$ is
a nice map since $M(f \times id)=M(f) \times C$ and $\pi_{Y \times C}=\pi_Y \times id$.
Also note that the identity map on a Hilbert type compactum is nice.

\begin{proposition}
\label{edwards}
Let $H$ be a model space,
 $f : H \lo Y$ be a nice cell-like map and $Y$  a Hilbert type compactum. Then the projection
$\pi_Y : M(f) \lo Y$ is a near homeomorphism.
\end{proposition}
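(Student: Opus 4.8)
The plan is to verify the hypotheses of the Edwards trick (Proposition~\ref{Edwardstrick}), applied to the proper surjection $f:H\lo Y$ and its mapping cylinder projection $\pi_Y:M(f)\lo Y$. So fix an open cover $\cal U$ of $Y$; we must produce a near homeomorphism $\alpha:M(f)\lo M(f)$ and a map $\beta:M(f)\lo Y$ that is $\cal U$-close to $\pi_Y$, with $\pi_Y=\beta\circ\alpha$ and $\beta$ one-to-one over $\beta(H\times\{0\})$.

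The natural candidate for $\beta$ comes from the niceness of $f$. First I would use that $f$ is nice to pick an embedding $g:Y\lo Y$ that is $\cal U$-close (and homotopic through a $\cal U$-controlled homotopy) to $\operatorname{id}_Y$ such that $Z:=\pi_Y^{-1}(g(Y))$ is a Z-set in $M(f)$. Now $M(f)$ is a model space by Corollary~\ref{mapping-cylinder} (since $Y$, being a Hilbert type compactum, is a compact \AR\ and $f$ is cell-like, $M(f)\approx H$), and $\pi_Y:M(f)\lo Y$ is a cell-like map onto the compact \AR\ $Y$. The idea is to compose: consider the cell-like map $g\circ\pi_Y:M(f)\lo Y$, whose range is naturally $g(Y)$, and whose fiber over $g(a)$ is $\pi_Y^{-1}(a)$ together with the copy of $a$ sitting in $g(Y)$—more precisely, apply Proposition~\ref{adjoint} with $H$ replaced by $M(f)$, the cell-like map $\pi_Y:M(f)\lo Y$, and the closed set $A=g(Y)\subset Y$, whose preimage $Z=\pi_Y^{-1}(g(Y))$ is a Z-set. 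Proposition~\ref{adjoint} then tells us the quotient map $\pi:M(f)\lo M(f)\cup_{\pi_Y}g(Y)$ is a near homeomorphism. Composing the identification $M(f)\cup_{\pi_Y}g(Y)$ with its obvious map down to $Y$ gives the factorization we want: set $\beta$ to be that composite map to $Y$ (so $\beta$ is $\cal U$-close to $\pi_Y$ because $g$ is $\cal U$-close to $\operatorname{id}_Y$), and set $\alpha=\pi$, which is a near homeomorphism, hence in particular a ``near homeomorphism'' in the required sense.

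The one point that needs care—and which I expect to be the main obstacle—is arranging that $\beta$ is one-to-one over $\beta(H\times\{0\})$. Note $H\times\{0\}$ is the bottom copy of $H$ in $M(f)$; under $\pi_Y$ it maps onto all of $Y$, so $\beta(H\times\{0\})$ is essentially $g(Y)$. Over $g(Y)$ the map $\beta$ is not literally injective—the fiber of $\beta$ over $g(a)$ contains $\pi_Y^{-1}(a)$—so the construction above must be refined so that the \emph{only} fiber-collapsing done by the quotient occurs away from where $\beta^{-1}(\beta(H\times\{0\}))$ actually meets $H\times\{0\}$. The right fix is to choose $g$ so that $g(Y)$ is disjoint from the image of the bottom level: recall the bottom $H\times\{0\}$ of $M(f)$ is itself a Z-set in $M(f)$, and under $\pi_Y$ it collapses by $f$ onto $Y$; so instead take $A=g(Y)$ with $g$ chosen via niceness so that, in addition, $\pi_Y^{-1}(g(Y))$ misses $H\times\{0\}$ (this is possible because $g(Y)$ can be pushed off $f(H\times\{0\})$... but $f(H\times\{0\})=Y$, so one instead pushes the Z-set $\pi_Y^{-1}(g(Y))$ off the Z-set $H\times\{0\}$ inside $M(f)$ using general position for Z-sets in the \AR\ $M(f)\approx H$). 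With $\pi_Y^{-1}(g(Y))\cap(H\times\{0\})=\emptyset$, the quotient $\pi$ in Proposition~\ref{adjoint} is injective on a neighborhood of $H\times\{0\}$, and then on $\beta^{-1}(g(Y))\cap(H\times\{0\})=\emptyset$ there is nothing to collapse, so $\beta\circ\alpha^{-1}$ restricted appropriately is one-to-one over $\beta(H\times\{0\})$; equivalently one argues directly that $\beta$ is injective over $\beta(H\times\{0\})$ since the only identifications made are over $g(Y)$, away from the bottom level. Once this disjointness is secured, Proposition~\ref{Edwardstrick} applies verbatim and $\pi_Y$ is a near homeomorphism. $\qed$
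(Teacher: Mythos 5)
Your overall strategy --- feed the adjunction construction of Proposition~\ref{adjoint} into Proposition~\ref{Edwardstrick} --- is the right one and matches the paper, but the execution breaks down exactly at the point you flag as the main obstacle, and your proposed fix does not work. The condition in Proposition~\ref{Edwardstrick} is that \emph{every} fiber of $\beta$ over a point of $\beta(H\times\{0\})$ is a singleton. With your choice $A=g(Y)$ and $\beta$ essentially equal to $\pi'_Y$ composed with a homeomorphism $h:M(f)\to M(f)\cup_{\pi_Y}g(Y)$ (you need such an $h$ anyway, since $\alpha$ must be a self-map of $M(f)$ and $\beta$ must be defined on $M(f)$ --- as written your $\alpha=\pi$ and your $\beta$ do not typecheck against Proposition~\ref{Edwardstrick}), the set $\beta(H\times\{0\})$ is close to $\pi_Y(H\times\{0\})=f(H)=Y$, i.e.\ it is a large subset of $Y$, most of which lies outside $g(Y)$; over such points $\pi'_Y$ still has the full mapping-cylinder fibers, so $\beta$ is very far from one-to-one there. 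Your proposed repair --- choosing $g$ so that $\pi_Y^{-1}(g(Y))$ misses $H\times\{0\}$ --- is literally impossible, since $\pi_Y^{-1}(g(Y))\cap(H\times\{0\})=f^{-1}(g(Y))\times\{0\}$ and $f$ is surjective; and pushing $\pi_Y^{-1}(g(Y))$ off $H\times\{0\}$ by an ambient homeomorphism destroys the property that the set is saturated for $\pi_Y$, so the adjunction no longer factors through $Y$. More to the point, disjointness of $\pi_Y^{-1}(A)$ from the bottom level is irrelevant to the required condition: the problem is not what happens \emph{on} $H\times\{0\}$ but what the full fibers of $\beta$ look like over the \emph{image} $\beta(H\times\{0\})$.

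The paper resolves this by making two changes you are missing. First, $A$ is taken to be not $g(Y)$ but an embedded copy of $H\times\{0\}$ inside $Y$: one approximates the surjection $\pi_Y|_{H\times\{0\}}=f$ by an embedding $H\times\{0\}\to Y$ using the \emph{strong universality} of the Hilbert type compactum $Y$ (a hypothesis your argument never uses), composed with the niceness embedding $g$ so that $\pi_Y^{-1}(A)$ is still a Z-set. Second, after obtaining the homeomorphism $h:M(f)\to M(f)\cup_{\pi_Y}A$ from the near homeomorphism $\pi$, one uses the Z-set unknotting theorem (via Corollary~\ref{mapping-cylinder}, $M(f)$ is a copy of the model space) to adjust $h$ so that it carries $H\times\{0\}$ \emph{onto} the Z-set copy of $A$ in the adjunction space; this is possible precisely because $A$ was chosen homeomorphic to $H\times\{0\}$ --- with your $A=g(Y)\approx Y$ this unknotting is unavailable, since at this stage of the argument one does not yet know that $Y\approx H$. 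After these adjustments $\beta(H\times\{0\})=A$, $\beta^{-1}(A)=H\times\{0\}$, and $\beta$ is one-to-one there because the adjunction collapsed exactly the fibers over $A$; only then does Proposition~\ref{Edwardstrick} apply.
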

\begin{proof} We aim to use Proposition~\ref{Edwardstrick}.
Since $Y$ is a Hilbert type compactum and $f$ is a nice map we conclude that
$\pi_Y$ restricted to $H\times \{0\}$ can be arbitrarily closely approximated
by an embedding $g : H\times \{0\} \lo  Y$ such that for $A=g(H\times \{0\}) $
we have that
$\pi_Y^{-1}(A)$ is a Z-set in $M(f)$.
Let $\pi : M(f) \lo M(f) \cup_{\pi_Y} A$ and $\pi'_Y :  M(f) \cup_{\pi_Y} A \lo Y$
be the natural projections. By Corollary~\ref{mapping-cylinder} and
Proposition~\ref{adjoint} the projection
$\pi$ is a near homeomorphism and $A$ is a Z-set in $M(f) \cup_{\pi_Y} A$.
Hence one can choose a homeomorphism $h : M(f) \lo  M(f) \cup_{\pi_Y} A $
so that  the map $\pi'_Y \circ h  : M(f) \lo Y$ is
 as close to $\pi_Y$ as we wish.  In addition,
 by Definition \ref{definition} and Corollary~\ref{mapping-cylinder} we can replace $h$ by its composition with a homeomorphism
 of $M(f)$ sending   $H \times \{ 0 \}$  to $h^{-1}(A)$. Hence we may assume
 that $h$  sends $H \times \{ 0 \}$ onto $A$. Putting $\alpha=h^{-1}\circ\pi: M(f) \lo M(f)$ and $\beta=\pi'_Y\circ h : M(f) \lo Y$ we note that $\alpha$ is a near homeomorphism,
   $\pi_Y =\beta\circ\alpha$, $\beta$ is one-to-one over $A= \beta(H\times\{0\})$,    the maps $\pi_Y$ and $\beta$ are as close as we wish. Thus Proposition~\ref{Edwardstrick} applies.
\end{proof}

  \begin{theorem}
  \label{near-homeo}
  Let $H$ be a model space. Then

  (i) $H\approx H\times Q$ and

  (ii) if  $f : H \lo Y$ is a cell-like map and $Y$ is a Hilbert type compactum
   then
  $f$ is a near homeomorphism.
  \end{theorem}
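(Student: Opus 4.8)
The plan is to derive both statements from the machinery already assembled, so the real work is to produce, for a given model space $H$, a cell-like map from $H$ onto something recognizably of ``Hilbert cube type'' and then feed it into Proposition~\ref{edwards}. For (i), I would start from the stability axiom $H\approx H\times[0,1]$, iterate it to get $H\approx H\times[0,1]^n$ for every $n$, and then pass to the limit. Concretely, consider the projection map $p\colon H\times Q\to H\times Q$ (or better, a cell-like map onto $H$ built from the coordinate projections $H\times Q\to H\times[0,1]^n$): the inverse limit presentation of $Q$ yields a cell-like map $H\times Q\to H$ whose fibers are copies of $Q$. To apply Proposition~\ref{edwards} I must check that this cell-like map is \emph{nice} and that $H$ (the range) is a Hilbert type compactum --- the latter is immediate since $H$ is a model space, hence an \AR\ and strongly universal, and the former follows from the remark just before Proposition~\ref{edwards} that any map of the form $f\times\mathrm{id}$ with $C$ a Hilbert type compactum is automatically nice, together with the fact that $Q$ itself is a Hilbert type compactum. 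Then the mapping cylinder $M(f)$ of this cell-like map is $\approx H\times Q$ on one side (it deformation-retracts appropriately, or one applies Corollary~\ref{mapping-cylinder} in the iterated form) while Proposition~\ref{edwards} says $\pi_Y\colon M(f)\to H$ is a near homeomorphism; chasing the identifications gives $H\times Q\approx H$.

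For (ii), suppose $f\colon H\to Y$ is a cell-like map with $Y$ a Hilbert type compactum. The obstruction to applying Proposition~\ref{edwards} directly is that $f$ need not be nice. The standard device is to stabilize: form $f\times\mathrm{id}_Q\colon H\times Q\to Y\times Q$, which by the remark before Proposition~\ref{edwards} \emph{is} nice, is still cell-like, and has range $Y\times Q$, which is again a Hilbert type compactum (product of a Hilbert type compactum with $Q$ is strongly universal and an \AR). Hence by Proposition~\ref{edwards} the projection $\pi_{Y\times Q}\colon M(f\times\mathrm{id}_Q)\to Y\times Q$ is a near homeomorphism. Now use part (i): $H\approx H\times Q$, so $M(f\times\mathrm{id}_Q)=M(f)\times Q$ and on the domain side everything is a copy of $H$. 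Unwinding, one obtains that $Y\times Q\approx H$. But I also want $Y\approx H$, not merely $Y\times Q\approx H$; here I invoke that $Y$ is itself a model space-like object --- actually the cleanest route is: since $Y\times Q\approx H$ and $H\approx H\times Q$, and since $Y$ is a Hilbert type compactum, I want to conclude $Y\approx Y\times Q$ too.

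The subtle point --- and the step I expect to be the main obstacle --- is precisely promoting ``$Y\times Q\approx H$'' to ``$Y\approx H$'' and then to ``$f$ is a near homeomorphism'' rather than just ``$M(f)\approx H$''. To get $Y\approx H$ I would argue that $Y$, being a Hilbert type compactum, satisfies $Y\approx Y\times Q$: this should follow by running the same cell-like-map-into-$Y$ argument with the identity map $\mathrm{id}_Y$ (which is nice), giving that the projection $Y\times Q\to Y$ is a near homeomorphism --- wait, that needs $Y$ to be a model space, which is not assumed. The correct logic must instead be: combine the homeomorphism $Y\times Q\approx H$ with part (i) applied to the model space $H$, namely $H\approx H\times Q\approx (Y\times Q)\times Q\approx Y\times Q\approx H$, which is consistent but circular for identifying $Y$. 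So the genuinely load-bearing ingredient is that once we know $\pi_{Y\times Q}$ is a near homeomorphism, the original $\pi_Y\colon M(f)\to Y$ is ALSO a near homeomorphism because $M(f)\times Q\approx M(f)$ (by part (i), since $M(f)\approx H$ by Corollary~\ref{mapping-cylinder}), and a map $g$ such that $g\times\mathrm{id}_Q$ is a near homeomorphism onto $Y\times Q$ with $\mathrm{domain}\times Q\approx\mathrm{domain}$ forces $g$ itself to be a near homeomorphism onto $Y$; from $\pi_Y$ a near homeomorphism and $\pi_Y\circ(\text{inclusion of }H\times\{0\})\simeq f$ in the appropriate controlled sense, one reads off that $f$ is a near homeomorphism. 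I would be careful to state and use a small lemma: if $g\colon A\to B$ is a map with $A\approx A\times Q$ and $B\approx B\times Q$ and $g\times\mathrm{id}_Q$ is a near homeomorphism, then $g$ is a near homeomorphism (this is a routine consequence of Bing shrinking, since shrinkability of the fibers of $g\times\mathrm{id}_Q$ transfers to shrinkability of the fibers of $g$). With that lemma in hand, both (i) and (ii) fall out as described, and I expect the total additional argument to be only a few lines.
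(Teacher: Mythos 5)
Your overall strategy (stabilize by $Q$, use niceness of $f\times\mathrm{id}$, and feed the result into Proposition~\ref{edwards}) is the paper's strategy, and part (i) is essentially right once you fix a small circularity: Proposition~\ref{edwards} requires the \emph{domain} of the nice cell-like map to be the model space $H$, so you cannot apply it directly to the projection $H\times Q\to H$ before knowing $H\times Q\approx H$, nor invoke Corollary~\ref{mapping-cylinder} for a cylinder whose top is $H\times Q$. The paper applies Proposition~\ref{edwards} to $\mathrm{id}\colon H\to H$, concluding that $H\times[0,1]\to H$ is a near homeomorphism, and then gets $H\times Q\approx H$ from the inverse limit $H\times Q=\varprojlim\, H\times[0,1]^n$ whose bonding maps are all near homeomorphisms (each $H\times[0,1]^n$ is a model space by stability).

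The genuine gap is in the last step of (ii). Your load-bearing lemma --- if $g\times\mathrm{id}_Q$ is a near homeomorphism and $A\approx A\times Q$, $B\approx B\times Q$, then $g$ is a near homeomorphism --- is not a ``routine consequence of Bing shrinking'': shrinking the fibers $g^{-1}(b)\times\{q\}$ by homeomorphisms of $A\times Q$ says nothing about shrinking the fibers $g^{-1}(b)$ by homeomorphisms of $A$ (the routine transfer goes only in the opposite direction), and in this generality the statement is essentially the hard content of the theorem. Moreover its hypothesis $Y\approx Y\times Q$ is exactly what you observe you cannot yet assert at that point. The paper's descent uses only what is already proved. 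First it records the ``nice case'' of (ii): if $f$ is nice then $f$ is a near homeomorphism, because $H\times[0,1]\to H$, $H\times[0,1]\to M(f)$ and $M(f)\to Y$ all are, by Corollary~\ref{mapping-cylinder} and Proposition~\ref{edwards}. Then, after your stabilization step gives $Y\times Q\approx H$, the space $Y\times Q$ is itself a model space, so the projections $p_H\colon H\times Q\to H$ and $p_Y\colon Y\times Q\to Y$ are nice cell-like maps from model spaces onto Hilbert type compacta and hence near homeomorphisms by the nice case; the identity $f\circ p_H=p_Y\circ(f\times\mathrm{id})$ then exhibits $f$ as a near homeomorphism (and incidentally yields $Y\approx Y\times Q\approx H$). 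Replacing your lemma with this projection argument closes the gap.
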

  \begin{proof}
   Since the identity map $id : H\lo H$ is a nice map we get
  by Proposition~\ref{edwards}
  that the projection $M(id)=H \times [0,1] \lo H$ is a near homeomorphism.
  Since $H \times Q$ is the inverse limit of the projection
  $H \times [0,1] \lo H$ we get that the projection $H \times Q \lo H$ is
  a near homeomorphism and hence $H \times Q\approx H$.

  For part {\em(ii)} assume first that $f:H \lo Y$ is a nice map. Then $f$ is a near homeomorphism since
  the projections $H \times [0,1] \lo H$, $H \times [0,1] \lo  M(f)$
  and $M(f) \lo Y$ are near homeomorphisms by Corollary~\ref{mapping-cylinder} and Proposition~\ref{edwards}.

  Now consider the general case. Recall that $H\approx H\times Q$ and
   $f \times id : H \times Q  \lo Y \times Q$ is a nice map, because $Q$ is a Hilbert type compactum. Then
   $f \times id$ is a near homeomorphism and hence $Y\times Q\approx H$.
   Note that
   the projections $H \times Q \lo H$ and $Y \times Q\lo Y$ are nice maps
   and therefore they
  are also near homeomorphisms. All this implies that
   $f$ is a near homeomorphism.
  \end{proof}

 \begin{theorem}
 \label{resolution}
 Any   compact AR  is a cell-like image of any model space.
 \end{theorem}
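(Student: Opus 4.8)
The plan is to produce, for an arbitrary compact \AR\ $Y$, a model space $H$ together with a cell-like map $H\lo Y$; since all model spaces are mutually homeomorphic by Theorem~\ref{near-homeo}(ii) (any model space is a cell-like image of itself via the identity, so the statement `any model space' follows once we have it for one), it suffices to construct one such resolution. The natural candidate for the source is an inverse limit of a tower of copies of $H$ (or $H\times[0,1]$) with cell-like bonding maps whose limit maps onto $Y$. So the first step is to realize $Y$ as an inverse limit of a sequence of compact \AR's $Y_1\ot Y_2\ot\cdots$ with all bonding maps cell-like --- for instance, take $Y_n$ to be (a regular neighborhood / mapping-cylinder thickening of) the nerve of a fine open cover of $Y$, so that $Y_n$ is a compact polyhedron, hence a compact \AR, and the bonding maps are the natural cell-like (indeed contractible-fiber) simplicial-type maps, with $\varprojlim Y_n\approx Y$ via a cell-like limit projection.

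The second step is to upgrade this tower of polyhedra to a tower of model spaces. Fix any model space $H$ (whose existence is hypothesized). By Corollary~\ref{mapping-cylinder}, for any cell-like map $f:H\lo C$ with $C$ a compact \AR, the mapping cylinder $M(f)$ is homeomorphic to $H$; more to the point, given a cell-like surjection $g:C'\lo C$ of compact \AR's we can form the pullback / composite and arrange a cell-like map $H\lo C'$ and then a model-space replacement sitting over it. Concretely, I would inductively build compact \AR's $X_n$ with $X_1=H$, cell-like maps $h_n:X_n\lo Y_n$, and cell-like bonding maps $X_{n+1}\lo X_n$ commuting with the $Y_n$-tower, where at each stage $X_{n+1}$ is obtained from $X_n$ by an \AR-resolution step --- this is exactly where Miller's cell-like resolution theorem (mentioned in the introduction as being reproved here) is used: it provides, for the compact \AR\ $Y_{n+1}$, a cell-like map from a copy of $H$, and a fibered version gives the bonding maps. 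Each $X_n$ is a cell-like image of $H$, hence a compact \AR; whether each $X_n$ can be taken to literally be a model space is not needed --- what we need is that the inverse limit be one.

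The third step is to pass to the limit. Set $X=\varprojlim X_n$ with limit projections $p_n:X\lo X_n$ and $q_n:Y\approx\varprojlim Y_n\lo Y_n$, and let $h:X\lo Y$ be the induced map; since each $h_n$ is cell-like and the bonding maps are cell-like, $h$ is cell-like (a limit of cell-like maps over a limit of \AR's, with the relevant compactness, is cell-like --- e.g.\ fibers of $h$ are nested intersections of cell-like sets). It remains to see that $X$ is a model space, i.e.\ strongly universal and an \AR\ and satisfies (i)--(ii) of Definition~\ref{definition}. That $X$ is a compact \AR\ follows since it is cell-like (even an inverse limit of \AR's with cell-like maps; Toru\'nczyk-style arguments give $X\in\AR$). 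For the model-space axioms I would use the remark following Definition~\ref{definition} that strong universality and Z-set unknotting are local: a neighborhood basis in $X$ is given, up to small error, by preimages $p_n^{-1}(U)$ of small sets $U\subset X_n$, and via the near-homeomorphisms supplied by Corollary~\ref{mapping-cylinder} and Theorem~\ref{near-homeo} these look, arbitrarily closely, like open subsets of $H$; since $H$ has these local properties and the limit projections are fine homotopy equivalences, the properties transfer to $X$. Hence $X$ is a model space and $h:X\lo Y$ is the desired cell-like map; finally, by Theorem~\ref{near-homeo}(ii), \emph{any} model space is homeomorphic to $X$, so $Y$ is a cell-like image of any model space.

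The main obstacle is the inductive construction in step two: making the resolution maps $h_n$ \emph{fibered} over the $Y_n$-tower so that the bonding maps $X_{n+1}\lo X_n$ exist, are cell-like, and carry enough control (Z-set and mesh estimates) to guarantee that the limit $X$ is again a compact \AR\ and, crucially, inherits strong universality and Z-set unknotting. Equivalently, one must run a controlled version of Miller's resolution theorem --- and the paper's remark that it simplifies Miller's theorem "by using techniques introduced later for proving the characterization theorems" suggests the intended route is precisely to feed Corollary~\ref{mapping-cylinder} and Proposition~\ref{edwards} back into the resolution construction rather than to invoke Miller as a black box.
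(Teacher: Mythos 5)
Your strategy---resolve $Y$ by an inverse system and lift the tower to a tower of copies of $H$---is not the paper's route, and as written it has two essential gaps. First, step one is unjustified: the bonding maps between nerves of refining open covers are \emph{not} cell-like (their point-inverses are merely small subcomplexes, with no contractibility), the nerves of covers of an \AR\ need not be \AR's, and the inverse limit of the nerve system need not be $Y$. Presenting a compact \AR\ as an inverse limit of compact \AR's with cell-like bonding maps is not an elementary reduction; it is essentially of the same depth as the resolution theorem you are trying to prove. Second, step two is circular: it invokes Miller's cell-like resolution theorem to produce the maps $h_n:X_n\lo Y_n$, but that theorem \emph{is} Theorem~\ref{resolution} (the introduction says this paper reproves Miller's theorem), and the ``fibered'' version needed to make the bonding maps $X_{n+1}\lo X_n$ exist, be cell-like, and commute with the $Y_n$-tower is exactly the hard content, which you acknowledge as ``the main obstacle'' and do not supply. (Your step three is comparatively harmless: nested intersections of cell-like compacta are cell-like, and once each $X_n$ is known to be a Hilbert type compactum the bonding maps are near homeomorphisms by Theorem~\ref{near-homeo}, so $\varprojlim X_n\approx H$; but this never gets off the ground without steps one and two.)

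The paper avoids any tower over $Y$. It takes a proper retraction $r:H\times Q\lo A\times Q$ onto a copy of $A\times Q$ sitting in $H\approx H\times Q$; such a retraction is \emph{convenient} (its extended mapping cylinder is $\approx H$) by Proposition~\ref{extended-cylinder} and Theorem~\ref{near-homeo}. The cylinder-doubling homeomorphism of Proposition~\ref{double}, iterated with mesh control, yields in Proposition~\ref{cone-resolution} a cell-like map $\cone(H)\lo\cone(A\times Q)$; since $\cone(H)\approx H$ (it is a mapping cylinder, Corollary~\ref{mapping-cylinder}), Theorem~\ref{near-homeo} gives $H\approx Q\times\cone(A\times Q)$. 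Hence $A\times Q$ is an $H$-manifold, and since strong universality and Z-set unknotting are local, $A\times Q$ is itself a model space; applying $\cone(\cdot)\approx(\cdot)$ to it gives $H\approx A\times Q$, and the projection $A\times Q\lo A$ is the desired cell-like map. The telescope refinement in Proposition~\ref{cone-resolution} is the controlled limiting process your sketch is groping for, but it is run inside mapping cylinders of a single retraction rather than over a polyhedral resolution of $Y$, which is what makes it self-contained. If you want to salvage your outline, you would at minimum have to prove your step one from scratch and replace the appeal to Miller by an explicit controlled construction---at which point you would be rebuilding the telescope argument.
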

 We will prove this theorem in the next section.
 Theorems Theorem~\ref{near-homeo} and Theorem~\ref{resolution}
 immediately imply the characterization theorem for
 Hilbert type compacta.

\section{Cell-like Resolution}
\label{cell-like-resolution}
In this section we prove Theorem~\ref{resolution}. For its proof
we need the following auxiliary propositions and constructions.
Recall that   a model space means  a model space for Hilbert type compacta.
  \begin{proposition}
  \label{z-set}
  Let $H$ be a model space,
   $X$  a Z-set in $H$   and $f : X\lo H$ any map.
  Then $f$ extends to a cell-like map $H \lo H$.
  \end{proposition}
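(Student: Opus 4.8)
\emph{Plan.} The strategy is to turn $f$ into a Z-embedding by stabilizing with a Hilbert cube coordinate, unknot the resulting Z-set using the model space axioms, and then discard the extra coordinate by means of a cell-like projection.

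First I would choose an embedding $\iota\colon X\to Q$ of the compactum $X$ into the Hilbert cube whose image is a Z-set of $Q$ --- for instance, embed $X$ into a face $\{0\}\times Q'$ of $Q=[0,1]\times Q'$. Set $\tilde f\colon X\to H\times Q$, $\tilde f(x)=(f(x),\iota(x))$. Then $\tilde f$ is an embedding, and $\tilde f(X)$ is a closed subset of $H\times\iota(X)$; since $\iota(X)$ is a Z-set of $Q$, the set $H\times\iota(X)$ is a Z-set of $H\times Q$, and therefore so is $\tilde f(X)$. Using Theorem~\ref{near-homeo}(i), fix a homeomorphism $\eta\colon H\times Q\to H$; then $F:=\eta\circ\tilde f\colon X\to H$ is an embedding onto the set $F(X)=\eta(\tilde f(X))$, which is a Z-set of $H$ because $\eta$ is a homeomorphism.

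Next I would invoke the Z-set unknotting theorem of Definition~\ref{definition}(ii), applied to the homeomorphic Z-sets $X$ and $F(X)$ and the homeomorphism $F$, taking $\Omega=H$ and the trivial cover $\{H\}$, so that all the control requirements are vacuous. The only hypothesis to check is that $F\colon X\to F(X)\subseteq H$ is homotopic to the inclusion $X\hookrightarrow H$; this holds because $H$ is an \AR{}, hence contractible, so any two maps of $X$ into $H$ are homotopic. Thus $F$ extends to a homeomorphism $\Phi\colon H\to H$. Finally, let $p\colon H\times Q\to H$ be the first projection and put $c:=p\circ\eta^{-1}\colon H\to H$; each fiber of $c$ is homeomorphic to the cell-like compactum $Q$, so $c$ is cell-like, and consequently $g:=c\circ\Phi\colon H\to H$ is cell-like, being the composition of a homeomorphism with a cell-like map. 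For $x\in X$ one has $g(x)=p\bigl(\eta^{-1}(\Phi(x))\bigr)=p\bigl(\tilde f(x)\bigr)=f(x)$, so $g$ extends $f$.

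The argument is little more than bookkeeping with Z-sets. The two points deserving care are the choice of $\iota$ landing in a Z-set of $Q$ (which is what makes $\tilde f(X)$ a Z-set of $H\times Q$, hence $F(X)$ a Z-set of $H$, so that the unknotting theorem applies) and the verification of the hypotheses of that theorem --- which, as noted, is immediate because $H$ is contractible. I do not expect a genuine obstacle here.
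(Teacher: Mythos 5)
Your proof is correct, but it takes a genuinely different route from the paper's. You convert $f$ into a Z-embedding by stabilizing with a $Q$-coordinate landing in a Z-set of $Q$, extend that embedding to a homeomorphism of $H$ by unknotting (with trivial control, the homotopy hypothesis being free since $H$ is contractible), and then kill the extra coordinate with the cell-like projection $H\approx H\times Q\to H$; every fiber of your extension is a copy of $Q$. The paper instead first reduces to the case where $f(X)$ is a Z-set (by composing with the inclusion $H\approx H\times\{0\}\subset H\times[0,1]$), embeds the mapping cylinder $M(f)$ as a Z-set in $H$, and invokes Proposition~\ref{adjoint} to shrink the decomposition of $H$ into cones over the fibers of $f$, so that the resulting cell-like extension has nondegenerate fibers only over $f(X)$. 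Your argument is shorter: it avoids the case split, the choice of an AR Z-set $A$ containing $f(X)$, the enlargement making $X$ an AR, and the adjunction-space machinery. The price is that you invoke Theorem~\ref{near-homeo}(i) ($H\approx H\times Q$), whereas the paper's proof needs only the stability axiom of Definition~\ref{definition} together with Proposition~\ref{adjoint}; since Theorem~\ref{near-homeo} is established before Section~\ref{cell-like-resolution} and does not depend on Theorem~\ref{resolution}, there is no circularity, and the coarser fiber structure of your extension is harmless for the uses of this proposition later in the paper (e.g.\ in Proposition~\ref{extended-cylinder}).
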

 \begin{proof} Let us first  consider
 the case when $f(X)$ is a Z-set.
  By the Z-set unknotting theorem
 we may  assume that $X$ and $f(X)$ are disjoint.
 Take any Z-set  $A\subset H$ such that
 $A$ is an AR, $f(X)\subset A$ and $X \cap A=\emptyset$.
  Consider  $f$ as a map $f : X \lo A$.
 By extending $f$ over a bigger  Z-set
 we may assume that $X$ is an AR.  Let
 $M(f)$ be the mapping cylinder of $f$ and
 $\pi_A : M(f) \lo A$ the projection. Embed
 $M(f)$ as a Z-set in $H$ so that
 $X$ and $A$ are identified with the corresponding natural subsets
 of $M(f)$. Then the adjunction space $Y=H \cup_{\pi_A} A$ is an AR,
 the projection  $\pi : H \lo Y$ is cell-like and hence,
  by Proposition~\ref{adjoint},  $\pi$ is a near homeomorphism
 and $A$ is a Z-set in $Y$.
 Thus there is a homeomorphism $g : Y \lo  H$  and by the Z-set unknotting
 theorem we may assume that $g$ sends   $A$ to $A$ by
 the identity map. Then $\pi$ followed by $g$ is
 the extension of $f$ we are looking for.

 Now consider the general case.
 Let the map  $\phi  : X \lo H \times [0,1]$ be defined  by
 $\phi(x)=(f(x), 0)$. By the previous case $\phi$ extends
 to a cell-like map $\Phi : H \lo  H \times [0,1]$. Then
 $\Phi$ followed by the projection of $H \times [0,1]$ to $H$
 is the required cell-like extension of $f$.
   \end{proof}

      Let $A\subset H$ be  a compact AR and $r : H \lo A$ a retraction.
   We assume that the mapping cylinder $M(r)$ is obtained from
   $H \times [0,1]$ by replacing $H \times \{1\}$ with $A$ and
   denote by $\pi_A : M(r) \lo A$ the projection induced by $r$
   and by $\pi_I : M(r) \lo [0,1]$ the projection to $[0,1]$.
   We can  rescale the interval $[0,1]$ to another interval $[a,b]$
   and consider $M(r)$ over $[a,b]$ assuming that the interval projection
    $\pi_I$ sends $A \subset M(r)$ to the right end point $a$.
    We will also refer to $A \subset M(r)$ as the {\em right $A$-part\/} of $M(r)$ and
    $A \times \{0\} \subset H \times \{ 0\}$ as the {\em left $A$-part\/} of $M(r)$
    and the projections $\pi_A$ and $\pi_I$ as the {\em $A$-projection} and
    the {\em interval   projection} respectively.

   By the {\em extended mapping cylinder} $E(r)$ of $r$ we mean the the union
   $ M(r)\cup H \times [1/2, 1]$ in which
   we assume that
   $M(r)$ is the mapping cylinder over $[0, 1/2]$ and
   $A\times \{1/2\}\subset H \times [1/2, 1]$ is
   identified with $A \subset M(r)$ (the right $A$-part of $M(r)$). We will call
   $H\times [1/2, 1]$ the extension part of $E(r)$.
    We define the projection $\pi_I : E(r) \lo [0,1]$
   by the projections of  $M(r)$ and  $H\times [1/2, 1]$ to
   $[0,1/2]$ and $[1/2,1]$ respectively and
   we  define
   the map  $\pi_H^E : E(r) \lo H$ as the union of  the projection
    $\pi_A : M(r) \lo A$ and  the projection  of  $H\times [1/2, 1]$ to $H$.
   Clearly we can rescale $[0,1]$ to an interval $[a,b]$ so that
    $M(r) \subset E(r)$  and the extension part of $E(r)$ will be sent
   by $\pi_I$
   to $[a,c]$ and $[c,b]$ respectively for some $ a < c <b$. In that
   case we say that $E(r)$ is the extended cylinder over $[a,b]$
   with $M(r)\subset E(r)$ being over $[a,c]$.

   \begin{proposition}
   \label{extended-cylinder}
   Let $H$ be a model space and $r : H \lo A$  a retraction. Then there is a cell-like map from
   $H$ to $E(r)$.
  \end{proposition}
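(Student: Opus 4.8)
The plan is to build the cell-like map $H \lo E(r)$ by combining two cell-like maps: one onto the mapping cylinder part $M(r)$ and one onto the extension part $H\times[1/2,1]$, and then glue them along a Z-set. First I would realize $E(r)$ concretely as the extended cylinder over some interval $[a,b]$, with $M(r)\subset E(r)$ sitting over $[a,c]$ and the extension part $H\times[c,b]$ over $[c,b]$, so that the two pieces meet exactly along the copy of $A$ (the right $A$-part of $M(r)$, identified with $A\times\{c\}$). The natural idea is: embed a copy of $E(r)$ (or of $M(r)$) as a Z-set in $H$ and use Proposition~\ref{z-set} and Proposition~\ref{adjoint} to manufacture the cell-like map by collapsing along the cylinder structure. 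Concretely, since $A$ is a compact AR and $M(r)$ is the mapping cylinder of the retraction $r$, the space $M(r)$ is a cell-like (in fact contractible) compact AR, and likewise $E(r)$ is a compact AR.

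The main step is to obtain a cell-like map $H\lo M(r)$. Here I would imitate the proof of Proposition~\ref{z-set}: embed $M(r)$ as a Z-set in $H$, arranging that the left $A$-part $A\times\{0\}$ of $M(r)$ coincides with a preselected Z-set copy of $A$ in $H$ that is itself an AR. Then form the adjunction space $Y = H\cup_{\pi_A} (\text{right }A\text{-part})$, where we collapse the cylinder $M(r)$ along $\pi_A:M(r)\lo A$; the resulting quotient map $\pi:H\lo Y$ is cell-like because its nontrivial point-preimages are the cells $\pi_A^{-1}(\text{pt})$, which are cones, hence cell-like. By Proposition~\ref{adjoint} (with $Z$ the Z-set $M(r)\subset H$ mapping onto the Z-set $A\subset Y$), $\pi$ is a near homeomorphism and $A$ is a Z-set in $Y$, and $Y$ is a compact AR. Choosing a homeomorphism $g:Y\lo H$ and applying the Z-set unknotting theorem to position $A$ correctly, composition gives essentially a cell-like map of $H$ whose restriction realizes the mapping-cylinder collapse; more to the point, running this construction the other way — i.e. embedding $M(r)$ as a Z-set and collapsing $H$ onto it rather than onto $A$ — yields the desired cell-like map $H\lo M(r)$. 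The extension part is easier: a cell-like map $H\lo H\times[1/2,1]$ is immediate from Proposition~\ref{adjoint} or just from the fact that the projection $H\times[1/2,1]\lo H$ is cell-like and, by Theorem~\ref{near-homeo}(ii) together with $H\approx H\times Q\approx H\times[0,1]$, a near homeomorphism, so its (near-)inverse works, or more directly one collapses $H\times[1/2,1]$ to $H\times\{1\}$.

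Finally I would glue. Write $E(r) = M(r)\cup (H\times[c,b])$ with intersection the copy of $A$. Take a cell-like map $p_1:H\lo M(r)$ as above and a cell-like map $p_2:H\lo H\times[c,b]$; arrange (using the Z-set unknotting theorem and the fact that $p_1^{-1}(A)$ and $p_2^{-1}(A)$ are Z-sets in $H$ that are AR's) that $p_1^{-1}(A)=p_2^{-1}(A)=:Z_0$ and that $p_1|Z_0 = p_2|Z_0$ as maps into the common $A$. Then $H$ itself can be written, up to homeomorphism, as $H\cup_{Z_0} H$ (two copies of $H$ glued along the Z-set $Z_0$), because gluing two copies of a model space along a common Z-set AR gives back the model space — this uses stability and the Z-set unknotting theorem, in the same spirit as the argument that $M(\mathrm{id})\approx H$. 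On this glued space the two maps $p_1,p_2$ combine to a single cell-like map onto $E(r)=M(r)\cup_A (H\times[c,b])$, and transporting back along the homeomorphism $H\approx H\cup_{Z_0}H$ gives the required cell-like map $H\lo E(r)$.

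The hard part will be the gluing/bookkeeping: making the two Z-sets $p_i^{-1}(A)$ literally coincide with matching restrictions, and proving cleanly that $H$ decomposes as two copies of itself glued along a Z-set AR so that the combined map is genuinely cell-like and continuous across the seam. All the individual ingredients (Z-set unknotting, Proposition~\ref{adjoint}, Proposition~\ref{z-set}, stability) are available; the work is in assembling them without circularity and checking that each point-preimage of the final map is cell-like — those over the interior of $M(r)$ are cones, those over the interior of the extension part are points or intervals crossed appropriately, and those over the seam $A$ are unions of two cell-like pieces meeting in a point, hence cell-like.
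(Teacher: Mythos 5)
There is a genuine gap, and it sits exactly at the step you flag as "more to the point": the production of a cell-like map $p_1:H\lo M(r)$. The adjunction construction you describe (embed $M(r)$ as a Z-set in $H$, collapse the fibers of $\pi_A$, apply Proposition~\ref{adjoint}) yields a cell-like map $H\lo Y$ with $Y\approx H$ --- that is precisely the proof of Proposition~\ref{z-set}, and its target is $H$, not $M(r)$. "Running the construction the other way'' is not a defined operation, and no argument available at this stage produces a cell-like map of $H$ \emph{onto} $M(r)$: the obvious candidate, the quotient $H\times[0,1/2]\lo M(r)$ collapsing the top face via $r$, has fibers $r^{-1}(a)\times\{1/2\}$ over $a\in A$, and the fibers of an arbitrary retraction need not be cell-like. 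In fact, obtaining a cell-like surjection of $H$ onto the compact \AR\ $M(r)$ is essentially an instance of Theorem~\ref{resolution}, which is the theorem this proposition is a lemma for, so filling this step by appeal to general resolution results would be circular. The gluing step also rests on the unproved assertion that $H\cup_{Z_0}H\approx H$ for a Z-set \AR\ $Z_0$, which is true but not available "in the same spirit as $M(\mathrm{id})\approx H$'' without further work (the seam in $H\times[0,1]=H\times[0,1/2]\cup H\times[1/2,1]$ is a copy of $H$, not of $Z_0$).

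The paper's proof shows why none of this is needed: no cell-like map onto $M(r)$ is ever constructed. Since $M(r)\setminus A$ is canonically an open copy of $H\times[0,1/2)$, one maps $H\times[0,1/2)$ onto it by the identity and concentrates all the collapsing in the extension part. Proposition~\ref{z-set} applied to the Z-set $H\times\{1/2\}$ in $H\times[1/2,1]$ and the map $(x,1/2)\mapsto(r(x),1/2)$ gives a cell-like map $\phi:H\times[1/2,1]\lo H\times[1/2,1]$ restricting to the prescribed map on the seam; the union of the identity on $H\times[0,1/2)$ with $\phi$ is continuous precisely because of the mapping-cylinder identification, and is the desired cell-like map $H\approx H\times[0,1]\lo E(r)$. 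The non-cell-like fibers of $r$ are thereby absorbed into the genuinely cell-like fibers of $\phi$, which is the one idea your plan is missing.
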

  \begin{proof} By \ref{z-set} there is a cell-like map
  $\phi : H\times [1/2,1] \lo H\times [1/2,1]$ so that
   $\phi(x,1/2)=(r(x), 1/2)$ for $x \in H$.
  Consider $E(r)$ as the extended mapping cylinder over $[0,1]$ with
  $M(r)$ being over $[0, 1/2]$ and identify
  $M(r) \setminus A$ with $H \times [0,1/2)$.
  Extend $\phi$ to
  the map $\Phi : H\times [0,1] \lo E(r)$ by
  $\Phi(x,t)=(x,t)$ for $x \in H$ and $ 0\leq t<1/2$.
  Then $\Phi$ is the required cell-like map.
  \end{proof}

  Let $H$ be a model space.
  We say that a retraction $r : H \lo A$ is a {\em convenient}
  retraction if  $E(r)$ is homeomorphic to $H$. Note that
  if $H$ is a model space and $r : H \lo A $ is any retraction then
  the induced retraction $r\times id :  H \times Q \lo A \times Q$
  is a convenient retraction because $E(r \times id)= E(r) \times Q$
  is a Hilbert type compactum and
  because, by Proposition~\ref{extended-cylinder}, $E(r\times id)$ admits a cell-like
  map from $H$ and this map is a near homeomorphism by Theorem~\ref{near-homeo}.

  Let $r : H \lo A$ be a retraction. By  the telescope $M(r,n)$ of
  $n$ mapping cylinders  of $r$ we mean the union
  $M(r,n)=M_1(r) \cup \dots \cup M_n(r)$
  where $M_i(r)$ is the mapping cylinder of $r$ over the interval
  $[t_{i-1}, t_i], t_i=i/n$  and the right $A$-part of $M_{i}$ is identified
  with the left $A$-part of $M_{i+1}(r)$ by the identity map of $A$
  for $1\leq i \leq n-1$.
  The projections of $M_i(r)$ to $A$ and $[t_{i-1}, t_i]$
  induce
  the corresponding projections $\pi_A : M(r,n)\lo A$ and $\pi_I : M(r,n)\lo [0,1]$.
  Clearly $[0,1]$ can be rescaled  to any interval $[a,b]$.

  In a similar way
  we define the infinite telescope $M(r, \infty)= \bigcup_{i=1}^\infty M_i(r)$ over
  an infinite  partition $0=t_0<t_1<t_2<\dots, t_i\lo \infty$, of
 the ray  $\re_+ = [0,\infty)$ 
 with the mapping cylinder $M_i(r)$ being  over
  the interval $[t_{i-1},t_i]$. Again the $A$-projections and the interval projections
  of $M_i(r)$ define the projections $\pi_A : M(r,\infty)\lo A$ and
  $\pi_\re : M(r, \infty) \lo \re_+$ to which we will refer as the $A$-projection
  and the $\re$-projection respectively. Note that if $r : H \lo A$ is a convenient retraction then
  $M(r,\infty)$ is homeomorphic to $H \times \re_+$. Indeed,
  assume that the cylinders of $M(r,\infty)$ are over the intervals
  $[i-1,i]$, $i=1,2\dots$. Then $\pi_\re^{-1}([0,1/2])$ is  homeomorphic
  to $H\times [0,1/2]$ and $\pi_\re^{-1}([i-1/2,i+1/2])$ is homeomorphic to the extended cylinder
  $E(r)$ which is homeomorphic to $H\approx H\times [i-1/2,i+1/2]$ since $r$ is convenient.
  Then using the Z-set unknotting theorem we can assemble all these pieces into
  a space homeomorphic to $H\times \re_+$.

  \begin{proposition}
  \label{double}
  Let $H$ be a model space and $r : H \lo A$ be a convenient retraction.
  Then there is a homeomorphism $\phi : M(r)\lo M(r,2)=M_1(r) \cup M_2(r)$
  such that $\phi$ sends the right $A$-part
   and the left $A$-part of $M(r)$
   to the right $A$-part of $M_2(r)$
  and  the left $A$-part of $M_1(r)$ respectively
   by the identity map of $A$. Moreover, $\phi$ can be chosen so that
  for the $A$-projections  $\pi_A : M(r)\lo A$ and $\pi_A^* : M(r,2) \lo A$
  the composition $\pi^*_A \circ \phi$ is as close to
  $\pi_A$ as we wish.
  \end{proposition}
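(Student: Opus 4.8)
The plan is to obtain $\phi$ (up to a small, controlled correction) as the inverse of a natural cell-like quotient map $q\colon M(r,2)\to M(r)$. I would define $q$ to be, on $M_1(r)$, the homeomorphism onto $M(r)$ that rescales the parameter interval $[0,1/2]$ onto $[0,1]$ while fixing $H$ and $A$, and, on $M_2(r)$, the composition $\iota\circ\pi_A^{M_2}$, where $\pi_A^{M_2}\colon M_2(r)\to A$ is the $A$-projection and $\iota$ embeds $A$ as the right $A$-part of $M(r)$. Since $r$ is a retraction, the two recipes agree on $M_1(r)\cap M_2(r)=A$, so $q$ is a well-defined proper surjection; it restricts to the identity of $A$ from the left $A$-part of $M_1(r)$ onto the left $A$-part of $M(r)$ and from the right $A$-part of $M_2(r)$ onto the right $A$-part of $M(r)$; its only nondegenerate point-inverses are the sets $(\pi_A^{M_2})^{-1}(a)=\cone(r^{-1}(a))$, so $q$ is cell-like; and $\pi_A\circ q=\pi_A^{*}$.

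Next I would show that $q$ is a near-homeomorphism; this is where convenience is used. From $E(r)\approx H$ one obtains $M(r,\infty)\approx H\times\re_+$ (as in the discussion preceding the proposition), and from this that $M(r,2)$ and $M(r)$ are as good as $H$: strong universality and the Z-set unknotting property are local, the local structure of $M(r)$ and $M(r,2)$ near the $A$-parts is that of a model space precisely because $r$ is convenient, and stability follows on multiplying by $Q$. Then Theorem~\ref{near-homeo}(ii) applies to the cell-like map $q\colon M(r,2)\to M(r)$ and makes it a near-homeomorphism. (Alternatively one verifies Bing's shrinking criterion for $q$ directly: inside $M(r,2)\approx H$ there is Hilbert-cube room to shrink each cell-like fibre $\cone(r^{-1}(a))$ into a small neighbourhood of its apex, which lies in the right $A$-part, by a homeomorphism of $M(r,2)$ that fixes both $A$-parts and moves $q$ only slightly; this is the kind of argument built into Proposition~\ref{Edwardstrick}.)

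Finally I would correct the approximating homeomorphism. Take a homeomorphism $h\colon M(r,2)\to M(r)$ so close to $q$ that $q\circ h^{-1}$ is close to $\mathrm{id}_{M(r)}$; by the shrinking construction (or, for the left $A$-part, which is a Z-set sitting in the bottom copy of $H$ of $M_1(r)$, by the Z-set unknotting theorem) I may assume $h$ agrees with $q$, hence with $\mathrm{id}_A$, on both the left $A$-part of $M_1(r)$ and the right $A$-part of $M_2(r)$. Then $\phi=h^{-1}\colon M(r)\to M(r,2)$ sends the right $A$-part of $M(r)$ onto the right $A$-part of $M_2(r)$ and the left $A$-part of $M(r)$ onto the left $A$-part of $M_1(r)$, each by $\mathrm{id}_A$; and since $\pi_A\circ q=\pi_A^{*}$ while $q\circ h^{-1}$ is close to the identity, $\pi_A^{*}\circ\phi=\pi_A\circ q\circ h^{-1}$ is as close to $\pi_A$ as we wish. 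The main obstacle is the middle step — squeezing out of convenience the fact that $q$ is a near-homeomorphism (equivalently, that $M(r)$ and $M(r,2)$ behave like $H$) — together with arranging the shrinking/unknotting corrections so that pinning down one $A$-part does not disturb the other or spoil the $A$-projection estimate.
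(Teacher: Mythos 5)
Your construction of the quotient map $q:M(r,2)\lo M(r)$ is fine: it is well defined, proper, satisfies $\pi_A\circ q=\pi_A^*$, and its nondegenerate fibres are the cones $(\pi_A^{M_2})^{-1}(a)=\cone(r^{-1}(a))$, which are cell-like. The final correction step would also be routine \emph{if} the middle step held. But the middle step is a genuine gap: to apply Theorem~\ref{near-homeo}(ii) to $q$ you need $M(r,2)$ to be homeomorphic to the model space $H$ and $M(r)$ to be a Hilbert type compactum, and neither is available at this point in the paper. Your locality argument does not deliver them, because it breaks down exactly at the terminal (right) $A$-parts of $M(r)$ and of $M_2(r)$. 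What convenience gives ($E(r)\approx H$, hence $M(r,\infty)\approx H\times\re_+$) is control over \emph{two-sided} neighborhoods of a copy of $A$ --- a mapping cylinder on one side and a product collar $H\times[\,\cdot\,]$ on the other. The right $A$-part of $M(r)$ has only a \emph{one-sided} mapping-cylinder neighborhood $\pi_I^{-1}((1-\delta,1])$, and deciding whether such a neighborhood is locally homeomorphic to $H$ (equivalently, whether $M(r)$ is strongly universal and satisfies Z-set unknotting near that end) is essentially the content of the proposition itself --- note that $r$ is merely a retraction, not cell-like, so Corollary~\ref{mapping-cylinder} does not apply to $M(r)$. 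The same objection defeats the parenthetical direct-shrinking alternative, since shrinking the cones $\cone(r^{-1}(a))$ toward their apices requires precisely this unknown structure at the right $A$-part of $M_2(r)$.

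The paper's proof is designed to avoid ever touching that terminal $A$-part. It splits both spaces over $[0,2/3]$ and $[2/3,1]$: the top pieces of $M(r)$ and $M(r,2)$ are literally the same mapping cylinder of $r$ over $[2/3,1]$ and are matched by the identity, while the bottom pieces are $H\times[0,2/3]$ and $E(r)$ --- the only two objects that convenience and stability certify to be homeomorphic to $H$ --- each carrying a cell-like projection onto $H$ that Theorem~\ref{near-homeo}(ii) turns into a near homeomorphism; the resulting homeomorphism $\psi$ between the bottom pieces is then adjusted on the Z-sets $A\times\{0\}$ and $H\times\{2/3\}$ by Z-set unknotting and glued to the identity on top. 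The control on $\pi_A^*\circ\phi$ comes for free because $\pi_A=r\circ\pi_H$ on the bottom pieces and the top pieces are matched identically. To repair your argument you would have to first prove that $M(r)$ and $M(r,2)$ are $H$-manifolds at their right $A$-parts, which is not easier than the statement being proved.
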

  \begin{proof} Let $\pi_I : M(r) \lo [0,1]$ and $\pi_I^* : M(r,2) \lo [0,1]$
  be the interval projections. One can naturally identify
   $\pi_I^{-1}([0,2/3])$ with $H \times [0,2/3]$ and
   $(\pi_I^*)^{-1}([0,2/3])$ with the extended mapping cylinder
   $E(r)$ of $r$ over the interval $[0,2/3]$ with mapping
   cylinder of $E(r)$ being over $[0,1/2]$ and
   being identified with $M_1(r)$.
   Let the map  $\pi_H^E : E(r)\lo H$ be as defined above.
   Since $r$ is a convenient retraction and
   $\pi^E_H$ is cell-like, we get that $\pi^E_H$ is a near homeomorphism (Theorem~\ref{near-homeo}).
   Since the projection $\pi_H : H \times [0,2/3] \lo H$ is also
   a near homeomorphism one can find  a homeomorphism
   $\psi : H\times [0, 2/3] \lo E(r)$ so that
   $\pi_H^E \circ \psi $ is  as close to $\pi_H$ as we wish.
   Since $H\times \{ 2/3\}$  and the left $A$-part
   of $M_1(r)$ are   Z-sets
   in $E(r)$ we can, in addition, assume
   by the Z-set unknotting theorem that that
   $\psi$ sends $A \times \{0\}$ and $H \times \{2/3\}$  to
   the left $A$-part of $M_1(r)$  and $H \times \{2/3\}$
    respectively by the identity maps. Extending $\psi$ over $H \times [0,1]$
    by the identity map between $\pi_I^{-1}([2/3,1])\subset M(r)$
    and $(\pi^*_I)^{-1}([2/3, 1])\subset M_2(r)$
    we get the required homeomorphism $\phi : M(r) \lo M(r,2)$.
    \end{proof}

  \begin{proposition}
  \label{cone-resolution}
  Let $H$ be a model space and $r : H \lo A$  a convenient retraction.
  Then there is a cell-like map from $\cone (H)$ to $\cone (A)$.
  \end{proposition}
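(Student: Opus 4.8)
The plan is to reduce everything to the infinite mapping telescope by passing to one‑point compactifications, and then to build the map between the compactifications by a telescoping argument.

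Since $r$ is convenient we already know $M(r,\infty)\approx H\times\re_+$, a locally compact space, so the one‑point compactification $M(r,\infty)^+$ is defined; and since $(H\times\re_+)^+\approx\cone(H)$ — the added point being the cone point, whose neighbourhood filter is generated by the tails $\pi_\re^{-1}((t,\infty))$ together with that point — we get $\cone(H)\approx M(r,\infty)^+$, and likewise $\cone(A)\approx(A\times\re_+)^+$. So it suffices to construct a \emph{proper} cell‑like surjection $g\colon M(r,\infty)\to A\times\re_+$: it extends to a cell‑like surjection $g^+\colon M(r,\infty)^+\to(A\times\re_+)^+$ sending the point at infinity to the point at infinity (the only new fibre being a singleton), and transporting $g^+$ along the two homeomorphisms gives the required cell‑like map $\cone(H)\to\cone(A)$.

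The construction of $g$ is the real content. The naive attempt fails: on a single cylinder the $A$‑projection $\pi_A\colon M(r)\to A$ is cell‑like (its fibres are the cones $\cone(r^{-1}(a))$, and a cone over any compactum is cell‑like), but as soon as $\pi_A$ is paired with an interval coordinate — which is unavoidable if all of $A\times\re_+$ is to be hit — those cones are sliced into the sets $r^{-1}(a)$, and a retraction of a model space need not have cell‑like point‑inverses. Circumventing this is the heart of the matter. I would build $g$ by telescoping the cell‑like map $H\to E(r)$ of Proposition~\ref{extended-cylinder}: along the cylinders of $M(r,\infty)$ one resolves the $i$‑th cylinder by borrowing the $(i+1)$st one as padding, exactly as $E(r)$ enlarges $M(r)$ by a copy of $H$, so that the fibres produced are cell‑like — they are fibres of the cell‑like map underlying Proposition~\ref{extended-cylinder}, which contain the offending sets $r^{-1}(a)$ but have been padded to triviality by construction. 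Proposition~\ref{double} is used alongside to re‑subdivide the telescope as the corrections accumulate, and the convenience of $r$ (so that $E(r)\approx H$ and $M(r,\infty)\approx H\times\re_+$) keeps all the pieces in standard form, so the procedure iterates.

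I expect the hardest step to be the assembly of these partial constructions into one map. One must check that the limit is well defined and continuous, proper, and surjective onto $A\times\re_+$, and — most importantly — that every fibre is cell‑like, including the fibre over the point at infinity, which should collapse to a single point while all the bad slices $r^{-1}(a)$ are swept into honestly cell‑like fibres over the finite part. This requires choosing the controls in the telescoping summably and running a Bing‑shrinking‑type bookkeeping in the spirit of Proposition~\ref{Edwardstrick}; checking that these estimates close up is the delicate point.
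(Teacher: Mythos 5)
Your global strategy is the right one and matches the paper's: construct a proper cell-like surjection from the infinite telescope $M(r,\infty)\approx H\times\re_+$ onto $A\times\re_+$ and then pass to the cones by adding the vertex over the point at infinity (properness makes that extension harmless). You have also correctly isolated the obstruction: pairing $\pi_A$ with an interval coordinate slices the cone fibres into the sets $r^{-1}(a)$, which need not be cell-like. The problem is that everything after that diagnosis is a gesture rather than a construction. The paper's proof does \emph{not} telescope the cell-like map $H\to E(r)$ of Proposition~\ref{extended-cylinder} (which in any case points the wrong way: it resolves $E(r)$, which contains a copy of $M(r)$, not of $A\times[0,1]$). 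Instead it iterates Proposition~\ref{double}: it fixes homeomorphisms $\psi^n_{n+1}$ from the telescope with cylinders of length $2^{-n}$ to the one with cylinders of length $2^{-(n+1)}$, chosen so that the $A$-projections after composition move by less than prescribed covers $\mathcal E_{n+3}$ of $A$, and takes the uniform limits $\phi_A$ and $\phi_\re$ of the composed $A$- and interval-projections. The map is $(\phi_A,\phi_\re)$; no single finite stage is cell-like onto $A\times\re_+$, and the fibres of the limit are not fibres of any map you have already proved cell-like.

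The step you defer as ``the delicate point'' is in fact the entire content of the proof, and it cannot be closed by ``Bing-shrinking-type bookkeeping'' alone. The fibre of $(\phi_A,\phi_\re)$ over $(a,t)$ is a decreasing intersection of preimages of small sets under the stage-$n$ maps, and such preimages are not cell-like for any individual $n$; cell-likeness of the limit fibre is extracted from a homotopy argument that uses a property of $A$ you never invoke, namely that $A$ is an \AR, so one may choose the covers $\mathcal E_n$ with the property that every element contracts inside a set of diameter less than $2^{-n}$. With that, the image of a fibre under $\psi^0_n$ sits in at most two consecutive cylinders and is homotoped to a point inside a controlled preimage, which is what certifies cell-likeness. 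Without specifying the controls (the covers $\mathcal E_n$, the requirement that $\pi_A^{n+1}\circ\psi^n_{n+1}$ be $\mathcal E_{n+3}$-close to $\pi_A^n$, and the observation that the interval coordinate of a point in the $i$-th cylinder stays in $[\tfrac{i-1}{2^n},\tfrac{i}{2^n}]$ under all later stages) you have neither convergence of the limit maps nor any handle on the fibres. So the proposal identifies the correct architecture but omits the construction that makes it work.
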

  \begin{proof}
  Denote by
  $M^n$ the infinite  telescope $M^n=\bigcup_{i=1}^\infty M^n_i$ of the mapping cylinders $M^n_i$
  of $r$
  over the intervals $[\frac{i-1}{2^n},\frac{i}{2^n}]$
  and
   let $\pi^n_A : M^n \lo A$ and $\pi^n_{\re} : M^n \lo \re_+$ be
   the $A$-projection and the  $\re$-projection of $M^n$.
   We are going to construct a cell like map $\phi : M^0 \lo A\times \re_+$.
 Let ${\cal E}_n,n=1,2,\dots,$ be a sequence of  open covers of $A$ such that
  $\mesh({\cal E}_n) < 1/2^n$,   $\st ({\cal E}_{n+1})$ refines ${\cal E}_n$
  and every set of ${\cal E}_{n}$   can be homotoped to
  a point inside a set of $\diam < 1/2^{n}$.
  By Proposition~\ref{double} take  a homeomorphism $\psi^n_{n+1} : M^n \lo M^{n+1}$
  sending each  mapping cylinder $M^n_i$ of $M^n$ to
  two consecutive mapping cylinders $M^{n+1}_{2i-1}$ and $M^{n+1}_{2i}$
  of $M^{n+1}$ as described in Proposition~\ref{double} and so that
  $\pi_A^{n+1} \circ \psi^n_{n+1}$ is ${\cal E}_{n+3}$-close to $\pi^{n}_A$.
  Denote  $\psi^n_m =\psi^{m-1}_m \circ \dots \circ \psi^n_{n+1} : M^n \lo M^m$
  for $m >n$, $\psi^n_n=id$,
  $\phi^n_A =\pi^n_A \circ \psi^0_n$ and $\phi^n_\re=\pi^n_\re \circ \psi^0_n$.
  Note that if  $x \in M^n$ belongs to $M^n_i$ then
  $ \phi^m_\re (x)) \in [\frac{i-1}{2^n},\frac{i}{2^n}]$ for every $m\geq n$.
  Thus we have that the sequences of maps $\{\phi^n_A\}$ and
  $\{ \phi^n_\re \}$
  converge and their limits we denote by  $\phi_A : M^0 \lo A$ and $ \phi_\re : M^0 \lo \re_+$ respectively.
  Consider $\phi=(\phi_A, \phi_\re) : M^0 \lo A \times \re_+$.

  We will show that $\phi$ is cell-like. Take $x=(a,t) \in A \times \re_+$ and let
  $F=\phi^{-1}(x)$. Fix $n$ and note that   $\psi^0_n(F)$ is always contained in
  at most two consecutive mapping cylinders $M^n_i$ and $M^n_{i+1}$ of $M^n$.
  Also note that $\phi_A^n(F) $ refines ${\cal E}_{n+1}$.  Then
  $\psi^0_n(F)$ can be homotoped to a point of the right  $A$-part of $M^n_i$
  inside the set $(\pi^n_A)^{-1}(B) \cap (M^n_i\cup M^n_{i+1})$ where
  $B\subset A$ is the closed $1/2^n$-neighborhood of $\phi_A^n(F)$
  in $A$. Note $\diam B  <   3/2^n$.
  Then $\diam \phi_A((\pi^n_A)^{-1}(B)) <5/2^n$
  and hence $F$ can be homotoped to a point inside the set
   $\phi^{-1}( C \times [\frac{i-1}{2^n},\frac{i+1}{2^n}])$ where $C$ is
   the closed $5/2^n$-neighborhood of $a$ in $A$. This implies that
   $\phi$ is cell-like.

   It is also clear that $\phi$ is a proper map. Recall that $M^0$ is homeomorphic
   to $H \times \re_+$. Identify $H\times \re_+$ and $A \times \re_+$
   with the complements of the vertices of $\cone (H)$ and $\cone (A)$ respectively.
   Then $\phi$ extends to a cell-like from $\cone (H) $ to $\cone (A)$
   by sending the vertex of $\cone (H)$ to the vertex of $\cone (A)$ and we are done.
 \end{proof}

\begin{proof}[Proof of Theorem~\ref{resolution}.] Let $A$ be  a compact AR  and $H$  a model space.
  Since $\cone (H)$ is the mapping cylinder of
  a constant map we get by Corollary~\ref{mapping-cylinder} that
  $H\approx \cone H$. Recall that  $A\times Q$ is a convenient retract of $H\approx H \times Q$.
  Then, by Proposition~\ref{cone-resolution},  $\cone(A \times Q)$ is a cell-like
  image of $H$ and hence, by Theorem~\ref{near-homeo}, $Q\times \cone(A\times Q)$ is
  homeomorphic to $H$. Thus $Q \times A \times Q \times [0,1]=A \times Q$ is an $H$-manifold.
  Since the Z-set unknotting theorem  is a local property
  we get that $A \times Q$ satisfies both conditions (i) and (ii) of
  \ref{definition}. Clearly $A\times Q$ is a Hilbert type compactum
   and hence $A \times Q$ is a model space.
  Then $\cone(A \times Q)\approx A \times Q$. Thus
  $H\approx Q\times \cone(A\times Q)\approx A\times Q$ and
  the projection
  of $A \times Q$ to $A$ is the  cell-like map we need.
\end{proof}

  \section{Topological characterization of Hilbert Space}\label{hilbert-space-characterization}
  The results of the previous sections were intentionally presented in
  such a way that they apply with minor clarifications
  for the characterization of Hilbert type spaces. In this section
  we  describe
  the adjustments in  the proofs  needed  in the  non-compact  setting.
 Everywhere we replace Hilbert type compacta by Hilbert type spaces,
  a model space will mean a model space for the Hilbert type spaces and
  a cell-like map will mean a proper cell-like map.
 Almost all the proofs  in Sections \ref{hilbert-cube-characterization}
 and \ref{cell-like-resolution} will work in the non-compact  setting
 with obvious trivial adjustments.
 So we will point out only those places  that require  clarifications.

\smallskip

\noindent {\bf Section \ref{hilbert-cube-characterization}.}

\smallskip

\noindent {\bf Nice Maps}. The property that for a  map $f : X \lo Y$
 the induced map $f\times id : X \times Q \lo Y \times Q$ is nice  remains
 true for  non-compact spaces $X$ and $Y$    provided $f$ is proper.
 Indeed, one can easily show that  any proper map $g=(g_Y, g_Q) : X \lo Y \times Q$
 can be arbitrarily closely approximated by

  (*)  a map $g'=(g'_Y,g'_Q) : X \lo Y\times Q$
  such that $g'_Y=g_Y$ and  $g'(X)\subset Y \times B(Q)$;

  (**) an embedding $g'=(g'_Y,g'_Q) : X \lo Y\times Q$ such that
  $g'_Y =g_Y$ and $g'(X) \subset Y \times (Q \setminus B(Q))$.

 Note that the maps $g'$ in (*) and (**) are proper
 (and  hence closed)  since $g'_Y=g_Y$ and $g$ is proper.
 Thus, by (**), the identity map of $Y\times Q$ can be arbitrarily closely
 approximated by a closed embedding $g' : Y \times Q \lo Y \times Q$
 such that $A=g'(Y \times Q) \subset Y \times (Q\setminus B(Q))$.
 Then $B=(f \times id)^{-1}(A)\subset X \times (Q \setminus B(Q))$
 and  hence, by (*), $B$ is a Z-set in $X \times Q$. Thus $f \times id$ is nice.

\smallskip

\noindent {\bf  Theorem~\ref{resolution}}.  The phrase compact AR should be replaced by the phrase Hilbert type space.
  The proof of this theorem is considered below  (clarifications to section \ref{cell-like-resolution}).

\smallskip

\noindent {\bf Section \ref{cell-like-resolution}}.
  In Section \ref{cell-like-resolution} we need to consider only
  proper maps and retractions and everywhere assume that $A$ is a Hilbert type space.

\smallskip

\noindent {\bf Proper retractions.} A proper retraction to a Hilbert type space $A$
   always exists. Indeed, let $A \subset X$ be a closed subset of a complete
   space $X$. Note that $A \times [0,1]$ is also a Hilbert type space
   with $A \times \{ 0 \}$ being a Z-set in $A \times [0,1]$.
   Then  the identity map $A \lo A \times  \{0 \}$ extends to
   a  Z-embedding $f : X \lo A \times [0,1]$ and $f$ followed  by the projection
   of $A \times [0,1]$ to $A$ provides a proper retraction from $X$ to $A$.

\smallskip

\noindent {\bf Convenient retractions.} The property that
  a retraction  $ r : H \lo A$  from
  a model space  $H$ to  $A \subset H$ induces a convenient
  retraction
   $r \times id : H\times Q \lo A \times Q$ remains true
   for non-compact spaces provided $r$ is proper.
   Indeed,  by Proposition~\ref{extended-cylinder}, there is a proper cell-like map
   $f : H \lo  E(r\times id)$. In order to show that
   $E(r\times id)$  is homeomorphic to $H$, it is enough to show, by Theorem~\ref{near-homeo},
   that  $E(r\times id) $ is strongly universal. Take any map
   $g : X \lo E(r\times id)$ from a complete  space $X$. Since
    $f$ is a fine homotopy  equivalence and $H$ is a Hilbert type space
     we can lift $g$ to a closed embedding $h : X \lo H$ so that
     $f\circ h$ is arbitrarily close to $g$. Note that
     $E(r \times id)=E(r)\times Q$. Then by (**) there is an  arbitrarily close
     approximation of  $f$ by a closed embedding $f' : H \lo E(r\times id)$
    and the closed embedding $f'\circ h : X \lo E(r \times id)$
    witnesses the strong universality of $E(r \times id)$.

\smallskip

\noindent {\bf  Proposition~\ref{cone-resolution}.}
    We assume that  metrics in $A$ and $M^0$ are  complete. The properness of $\phi$
    can be acheived as follows. Note that $\phi^n_A$ is proper on each cylinder
    $M^0_i$  of $M^0$. Then we can additionally  assume  that $\phi^{n+1}_A$ is so close
    to $\phi^n_A$ that every fiber of $\phi^{n+1}_A|M^0_i$ is contained in
    the $1/2^{n+1}$-neighborhood of the corresponding fiber of $\phi^n_A|M^0_i$ for every $i$. Take a compact
    set $K$ in $A\times \re_+$ and let  a compact set $K_A \subset A$ and
    a natural number $k $  be so that $K \subset K_A \times [0,k]$.
    Then for every $n$ we have that $\phi^{-1}(K)$ is contained in the $1/2^n$-neighborhood  of
    the compact set $(\phi^n_A ,\phi^n_\re)^{-1}(K_A \times [0,k+1])$.
    This implies that $\phi^{-1}(K)$ is compact and hence $\phi$ is proper.

\smallskip

\noindent {\bf Proof of  Theorem~\ref{resolution}.} 
We need to assume that $A$ is a Hilbert type space
  and the only thing  we need
  to verify is that $cone H \approx H$ for a model space  $H$.
  Let $ \pi : H\times [0,1] \lo coneH$ be the projection with $H \times \{ 1 \}$
  being over the vertex.
   Consider $H \times [0,1)$ as
  an open  subspace of $cone H$ and  define the basis of the vertex by the complements
  of $H \times [0,t]$ for $0\leq t < 1$. Then $\pi $ is a near homeomorphism. Indeed,
  fix $\varepsilon >0$,  take  a point
  $x \in U_\varepsilon =H \times (1-\varepsilon, 1)$ and a neighborhood
  $U_x \subset U_\varepsilon$ of $x$.  Approximate the constant map sending $H \times \{ 1\}$
  to $x$  by a  Z-embedding $\phi : H \times \{ 1\} \lo H \times [0,1]$
  such that $\phi (H \times \{ 1 \}) \subset U_x$ and by the Z-set unknotting
  theorem extend $\phi$ to a homeopmorphism $\Phi : H \times [0,1] \lo H \times [0,1]$
  supported by $U_{2\varepsilon}$.
  Now  take a homeomorphism
  $\Psi : H \times [0,1] \lo H \times [0,1]$ supported by $U_{2\varepsilon}$  such that
   $\Psi$  leaves every interval $a \times [0,1], a \in H$
  invariant and  sends $U_\varepsilon$ into $\Phi^{-1}(U_x)$. Then, by Bing's
  shrinking criterion,  $\Phi \circ \Psi^{-1}$ witnesses that $\pi$ is a near homeomorphism.

  \smallskip

\noindent {\bf Remark.} It is also possible to adjust Sections \ref{hilbert-cube-characterization}
 and \ref{cell-like-resolution} for the non-compact setting by replacing cell-like maps
 by fine homotopy equivalences. This approach is more general  but it
 requires more clarifications.

%\def\br#1{d:/Jan/wiskunde/texinputs/#1}

%\bibliographystyle{\br{michael}}
%\bibliographystyle{alpha}
%\bibliography{\br{strings},\br{mill},\br{publ},\br{eric}}

\begin{thebibliography}{1}
\bibitem{ageev1}
S. M. ~Ageev, {\em Axiomatic method of partitions in the theory 
of N\"{o}beling spaces.
II. An unknotting theorem,} (Russian)  Mat. Sb.  198  (2007),  no. 5, 3--32;  translation in  Sb. Math.  198  (2007),  no. 5-6, 597–-625

\bibitem{ageev2}
S. M. ~Ageev, {\em Axiomatic method of partitions in the theory 
of N\"{o}beling spaces. I. Improving partition connectivity, }
(Russian)  Mat. Sb.  198  (2007),  no. 3, 3--50;  translation in  Sb. Math.  198  (2007),  no. 3-4, 299–-342 

\bibitem{ageev3}
S. M. ~Ageev, {\em Axiomatic method of partitions in the theory 
of N\"{o}beling spaces. III. Consistency of the system of axioms},
 (Russian)  Mat. Sb.  198  (2007),  no. 7, 3--30;  translation in  Sb.
  Math.  198  (2007),  no. 7-8, 909–-934

\bibitem{bestvina}
M.~Bestvina, {\em Characterizing $k$-dimensional universal {M}enger compacta},
  Mem. Amer. Math. Soc. {\bf 71} (1988), no.~380, vi+110.

\bibitem{bbmw}
M.~Bestvina, P.~Bowers, J.~Mogilski, and J.~J. Walsh, {\em Characterizing
  {H}ilbert space manifolds revisited}, Top. Appl. {\bf 24} (1986), 53--69.

\bibitem{chapman:structure}
T.~A. Chapman, {\em On the structure of {H}ilbert cube manifolds}, Compositio
  Math. {\bf 24} (1972), 329--353.

\bibitem{edwards:charac}
R.~D. Edwards, {\em Characterizing infinite dimensional manifolds
  topologically}, S\'{e}minaire Bourbaki 540 (Lecture~Notes
  in~Mathematics~volume 770, ed.), Springer-Verlag, Berlin, 1979, pp.~278--302.

\bibitem{levin1}
M. Levin, {\em  A Z-set unknotting theorem for N\"{o}beling spaces,}
  Fund. Math.  202  (2009),  no. 1, 1–-41.
\bibitem{levin2}
M. Levin, {\em Characterizing N\"{o}beling spaces,} (2006),
arXiv:math/0602361.
\bibitem{vm:book}
J.~van Mill, {\em Infinite-dimensional topology: prerequisites and
  introduction}, North-Holland Publishing Co., Amsterdam, 1989.
\bibitem{miller}
 R. T. Miller, {\em Mapping cylinder neighborhoods of some ANR's,} 
Ann. of Math. (2)  103  (1976), no. 2, 417–-427.
\bibitem{nagorko}
A. Nag\'orko, {\em  Characterization and topological rigidity 
of N\"{o}beling manifolds,}  
Mem. Amer. Math. Soc.  223  (2013),  no. 1048, viii+92 pp.
\bibitem{tor:hilbertcube}
H.~Toru\'nczyk, {\em On ${C}{E}$-images of the {H}ilbert cube and
  characterizations of ${Q}$--manifolds}, Fund. Math. {\bf 106} (1980), 31--40.

\bibitem{tor:hilbertspace}
H.~Toru\'nczyk, {\em Characterizing {H}ilbert space topology}, Fund. Math. {\bf
  111} (1981), 247--262.

\bibitem{tor:fix} H. Toru\'nczyk, {\em A correction of two papers concerning Hilbert manifolds: ``Concerning locally homotopy negligible sets and characterization of $l_2$-manifolds''  and
``Characterizing Hilbert space topology''}, Fund. Math. {\bf125} (1985), 89–93.
\bibitem{walsh:alternate}

J.~J. Walsh, {\em Characterization of {H}ilbert cube manifolds: an alternate
  proof}, Geometric and Algebraic Topology (Banach Center Publications 18)
  (H.~Toru\'nczyk, S.~Jackowski, and S.~Spie\.{z}, eds.), PWN, Warzawa, 1986,
  pp.~153--160.

\end{thebibliography}

\def\cprime{$'$}
\makeatletter \renewcommand{\@biblabel}[1]{\hfill[#1]}\makeatother

\end{document}